\newtheorem{thm}{Theorem}[section]
\newtheorem{lemma}[thm]{Lemma}
\newtheorem{cor}[thm]{Corollary}
\newtheorem{prop}[thm]{Proposition}
\newtheorem{defn}[thm]{Definition}
\newtheorem{remark}[thm]{Remark}
\numberwithin{equation}{section}
\newcommand{\conn}{\nabla}
\newcommand{\der}{\mathrm{d}}
\newcommand{\N}{\mathbb{N}}
\newcommand{\Z}{\mathbb{Z}}
\newcommand{\R}{\mathbb{R}}
\newcommand{\C}{\mathbb{C}}
\newcommand{\cpx}{\mathbb{C}}
\newcommand{\bP}{\mathbb{P}}
\newcommand{\bu}{\boldsymbol{u}}
\newcommand{\bv}{\boldsymbol{v}}
\newcommand{\bX}{\mathcal{X}}
\newcommand{\unu}{\underline{\nu}}
\newcommand{\bS}{\mathbb{S}}
\newcommand{\bF}{\mathbb{F}}
\newcommand{\CM}{\mathcal{M}}
\newcommand{\CO}{\mathcal{O}}
\newcommand{\pt}{\mathrm{pt}}
\newcommand{\Cone}{\mathrm{Cone}}
\newcommand{\ev}{\mathrm{ev}}
\begin{document}

\title{Open Gromov-Witten invariants and SYZ under local conifold transitions}
\author[Lau]{Siu-Cheong Lau}
\address{Department of Mathematics\\ Harvard University\\ One Oxford Street\\ Cambridge \\ MA 02138\\ USA}
\email{s.lau@math.harvard.edu}

\begin{abstract}
For a local Calabi-Yau manifold which arises as smoothing of a toric Gorenstein singularity, this paper derives the open Gromov-Witten invariants of a generic fiber of the special Lagrangian fibration constructed by Gross and thereby constructs its SYZ mirror.  Moreover it shows that the SYZ mirror of a local Calabi-Yau manifold and that of its conifold transition can be analytic continued to each other, which gives a global picture of SYZ mirror symmetry in this setup.
\end{abstract}

\maketitle

\section{Introduction}

Let $N$ be a lattice and $P$ a lattice polytope in $N_\R$.  It is an interesting classical problem in combinatorial geometry to construct Minkowski decompositions of $P$.

On the other hand, consider the family of polynomials $\sum_{v \in P \cap N} c_v z^v$ for $c_v \in \C$ with $c_v \not= 0$ when $v$ is a vertex of $P$.  All these polynomials have $P$ as their Newton polytopes.   This establishes a relation between geometry and algebra, and the geometric problem of finding Minkowski decompositions is transformed to the algebraic problem of finding polynomial factorizations.

One purpose of this paper is to show that the beautiful algebro-geometric correspondence between Minkowski decompositions and polynomial factorizations can be realized via \emph{SYZ mirror symmetry}.

A key step leading to the miracle is brought by a result of Altmann \cite{altmann}.  First of all, a lattice polytope corresponds to a toric Gorenstein singularity $X$.  Altmann showed that Minkowski decompositions of the lattice polytope correspond to (partial) smoothings of $X$.  From string-theoretic point of view different smoothings of $X$ belong to different sectors of the same stringy K\"ahler moduli.  Under mirror symmetry, this should correspond to a complex family of local Calabi-Yau manifolds, and the various sectors (coming from Minkowski decompositions) correspond to various (conifold) limits of the complex family.

We show that such complex family can be realized via SYZ construction by using special Lagrangian fibrations on smoothings of toric Gorenstein singularities constructed by Gross \cite{gross_examples}.  This paper follows the framework of SYZ given in \cite{CLL} which uses open Gromov-Witten invariants rather than their tropical analogs.  \emph{All the relevant open Gromov-Witten invariants are computed explicitly} (Theorem \ref{openGW_Fr}), which gives a more direct understanding to symplectic geometry.

From the computations we obtain an explicit expression of the SYZ mirror:

\begin{thm}[see Theorem \ref{SYZ thm} for the detailed statement] \label{SYZthm1}
For a smoothing of a toric Gorenstein singularity coming from a Minkowski decomposition of the corresponding polytope, its SYZ mirror is
$$ uv = \prod_{i=0}^p \left(1 + \sum_{l = 1,\ldots,k_i} z^{u^i_l} \right) $$
where $u^i_l$ are vertices of simplices appearing in the Minkowski decomposition.
\end{thm}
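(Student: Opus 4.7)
The plan is to apply the SYZ construction developed in \cite{CLL} to Gross's special Lagrangian fibration \cite{gross_examples} on the smoothing $X$, feeding in the explicit open Gromov--Witten counts provided by Theorem \ref{openGW_Fr}. The desired mirror equation should then assemble directly in the stated factored form.

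First I would set up the geometry. Recall Gross's special Lagrangian fibration and describe its codimension-two discriminant locus in terms of the Minkowski decomposition $P = \Delta_0 + \cdots + \Delta_p$. The key structural point is that each simplex $\Delta_i$ corresponds to one connected component of discriminant ``legs'', with $k_i$ legs labeled by the non-apex vertices $u^i_l$ of $\Delta_i$; this is the geometric manifestation of Altmann's correspondence between Minkowski summands and deformation directions. Away from the discriminant the fibers are Lagrangian three-tori, so the SYZ mirror is built as the dual torus fibration, glued along two conifold-type coordinates $u,v$ over each codimension-two component and corrected by wall-crossing from open Gromov--Witten invariants, as in \cite{CLL}.

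Next I would invoke Theorem \ref{openGW_Fr} to enumerate the Maslov index two holomorphic discs bounded by a generic fiber. Each vertex $u^i_l$ should give rise to a basic disc class whose boundary represents the cycle $u^i_l$ in $H_1$ of the fiber, contributing the monomial $z^{u^i_l}$ to the local superpotential attached to the $i$-th discriminant component; additionally each component contributes the constant $1$ from the ``section'' disc that closes up in the total space. Within the CLL conifold ansatz $uv = W$, the full superpotential is then collected from these disc counts, and I would extract the factored form $W = \prod_i \bigl(1 + \sum_l z^{u^i_l}\bigr)$ by matching each factor with the discriminant component of $\Delta_i$.

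The main obstacle will be justifying the product structure $W = \prod_i W_i$ rather than a sum or something more intricate. This amounts to showing that the wall-crossing transformations across discriminant legs associated with distinct summands compose multiplicatively in the $uv$-equation, and that no ``mixed'' disc class spanning several summands contributes a quantum correction. I expect this multiplicativity to follow from two inputs: the discriminant components coming from different $\Delta_i$ are disjoint in the base, so locally one can always choose a chamber seeing only one simplex at a time; and Theorem \ref{openGW_Fr} should precisely classify the contributing Maslov-two discs fiber by fiber, ruling out mixed classes. Once that is in place, writing down the factored mirror and identifying its factors with the simplices of the Minkowski decomposition is a bookkeeping exercise in the CLL framework.
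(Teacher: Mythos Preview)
Your overall plan matches the paper's: run the CLL construction on Gross's fibration and plug in Theorem \ref{openGW_Fr}. But your account of \emph{why} the answer factorizes is backwards, and if you tried to execute the proof as written you would get stuck.

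You say the multiplicativity requires that ``no `mixed' disc class spanning several summands contributes a quantum correction.'' In fact mixed classes \emph{do} contribute, and they are precisely the cross terms of the product. What Theorem \ref{openGW_Fr} actually says is that for a fiber above all the walls, a class $\beta_0 + \sum_{i,j} n^i_j \beta^i_j$ has $n_\beta = 1$ exactly when, for each $i$ independently, at most one $n^i_j$ equals $1$ and the others vanish; otherwise $n_\beta = 0$. Summing the holonomy weights over all such classes gives
\[
\sum_{\epsilon_0 \in \{0,\, u^0_1,\ldots,u^0_{k_0}\}} \cdots \sum_{\epsilon_p \in \{0,\, u^p_1,\ldots,u^p_{k_p}\}} z^{\epsilon_0 + \cdots + \epsilon_p}
= \prod_{i=0}^p \Bigl(1 + \sum_{l=1}^{k_i} z^{u^i_l}\Bigr),
\]
so the factorization is an immediate combinatorial consequence of the independence clause in Theorem \ref{openGW_Fr}, not a separate multiplicativity argument to be supplied. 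Relatedly, there is no per-component ``section disc'': there is a single basic class $\beta_0$ hitting $D_0$, and the constant $1$ in each factor records the option of attaching no Maslov-zero disc from that wall.

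The paper's proof then packages this as follows: compute the generating functions $\bu$ and $\bv$ (attached to $D_0$ and $D_\infty$) chamber by chamber, observe from the enumeration above that $\bu$ picks up one extra factor $(1+\sum_l z^{u^i_l})$ each time you cross $H_i$ while $\bv$ loses the same factor, so $\bu\bv$ is chamber-independent and equal to the product. That is the relation $uv = \prod_i(\cdots)$ defining the mirror. Your wall-crossing intuition about disjoint hyperplanes is therefore correct, but it is realized through the inclusion of mixed classes, not their exclusion.
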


See Section \ref{Gfib} and \ref{SYZ} for more details on Minkowski decomposition and related notations.  Thus the SYZ mirror gives a factorization of a polynomial corresponding to a Minkowski decomposition of $P$.  The following diagram summarizes the dualities that we have and their relations:

\begin{figure}[htp]
\begin{center}
\includegraphics[scale=0.75]{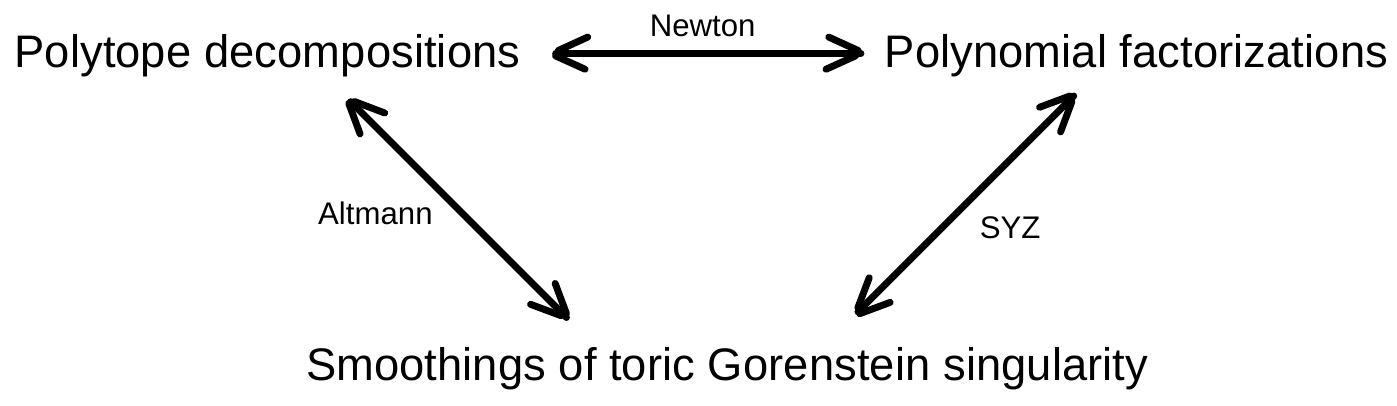}
\end{center}
\end{figure}

Another motivation of this paper comes from \emph{global SYZ mirror symmetry}.  Currently most studies in SYZ mirror symmetry focus on the large complex structure limit, and other limit points of the moduli are less understood.  Open Gromov-Witten invariants and SYZ for toric Calabi-Yau manifolds were studied in \cite{CLL,CLT11,CCLT13}, which are around the large complex structure limit.  This paper studies SYZ for conifold transitions of toric Calabi-Yau manifolds, which are other limit points of the moduli.  This gives a more global understanding of SYZ mirror symmetry.

More concretely, we prove the following:

\begin{thm}[see Theorem \ref{thm:ct} for the complete statement]
Let $Y$ be a toric Calabi-Yau manifold associated to a triangulation of a lattice polytope $P$, and $\bX_t$ be the conifold transition of $Y$ induced by a Minkowski decomposition of $P$.

Then their SYZ mirrors $\check{\bX}$ and $\check{Y}_q$ are connected by an analytic continuation: there exists an invertible change of coordinates $q(\check{q})$ and a specialization of parameters $\check{q} = \underline{\check{q}}$ such that
$$\check{\bX} = \check{Y}_{q(\check{q})}|_{\check{q} = \underline{\check{q}}}.$$
\end{thm}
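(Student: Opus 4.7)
The plan is to compare the two SYZ-mirror formulas directly. By Theorem \ref{SYZthm1}, $\check{\bX}$ is the hypersurface
$$uv = \prod_{i=0}^p \Big(1 + \sum_{l=1}^{k_i} z^{u^i_l}\Big),$$
and expanding the product yields $uv = \sum_{v \in P \cap N} C_v\, z^v$, where $C_v$ is the non-negative integer counting representations of $v$ as $u^0_{l_0} + \cdots + u^p_{l_p}$ with $l_i \in \{0,1,\ldots,k_i\}$ (under the convention $u^i_0 := 0$). The SYZ mirror $\check{Y}_q$, recalled from \cite{CLL,CLT11,CCLT13} and the computations of Theorem \ref{openGW_Fr}, has the same shape $uv = \sum_v c_v(q)\, z^v$, indexed by rays in the triangulation of $P$; each coefficient $c_v(q)$ is a generating function of open Gromov-Witten invariants in the K\"ahler parameters $q$ of $Y$.

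The proof then splits in two. First, separate the K\"ahler parameters of $Y$ into those associated to curve classes that persist through the conifold transition and those associated to classes of vanishing cycles contracted at the singular locus. I would construct the invertible change of coordinates $q = q(\check{q})$ so that the first group is expressed in terms of the monomials $z^{u^i_l}$ appearing in the Minkowski product, while the second group of new coordinates is left free and renamed $\check{q}_*$. Second, specialize $\check{q}_* = \underline{\check{q}}_*$ to the value dictated by the conifold limit on the complex moduli; under this specialization the termwise identity $c_v(q(\check{q}))\big|_{\check{q}_* = \underline{\check{q}}_*} = C_v$ must hold for every lattice point $v \in P \cap N$.

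The main obstacle is establishing this last termwise identity. The coefficients $c_v(q)$ are \emph{a priori} infinite generating series of open GW invariants, so one needs to show that they telescope, upon the specialization, to the explicit integers $C_v$ read off from the Minkowski decomposition. The strategy is to plug in the explicit formulas supplied by Theorem \ref{openGW_Fr}, combine them with the toric mirror map for $Y$ known from \cite{CLT11,CCLT13}, and observe the combinatorial collapse produced by shrinking the vanishing cycles: geometrically this is the mirror image of the conifold contraction, and algebraically it implements precisely the factorization of the polynomial $\sum_v C_v\, z^v$ into the product $\prod_i \bigl(1 + \sum_l z^{u^i_l}\bigr)$ of Theorem \ref{SYZthm1}. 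Once this collapse is verified ray by ray, the theorem follows.
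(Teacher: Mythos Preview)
Your proposal misidentifies both the change of coordinates $q(\check{q})$ and the mechanism by which the ``collapse'' occurs, and this leaves a genuine gap.

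In the paper's argument the change of coordinates $q(\check{q})$ is not constructed from the geometry of the conifold transition (persisting versus vanishing curve classes); it is simply the \emph{mirror map} for the toric Calabi--Yau $Y$, taken directly from the open mirror theorem of \cite{CCLT13}. The whole point of invoking that theorem is that it already does the hard work you describe as the ``main obstacle'': under the mirror map the SYZ mirror of $Y$ becomes literally the Hori--Vafa hypersurface
\[
uv \;=\; z^{v_1} + \cdots + z^{v_n} + \sum_{v \in (P\cap N)\setminus\{v_i\}} \check{q}_v\, z^v,
\]
with the infinite open GW generating series $c_v(q)$ replaced by the bare parameters $\check{q}_v$. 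No termwise telescoping needs to be checked; the open mirror theorem packages exactly that. From here one only performs a monomial rescaling of the $z$-variables (and of $u$) to normalize the $n$ distinguished coefficients, and then sets each remaining $\check{q}_v$ equal to the integer $n_v$ coming from the Minkowski expansion (divided by the constant introduced by the rescaling). That is the entire specialization.

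A second, related issue: you propose to ``plug in the explicit formulas supplied by Theorem \ref{openGW_Fr}'' to evaluate the coefficients $c_v(q)$ of $\check{Y}_q$. But Theorem \ref{openGW_Fr} computes the open invariants of the \emph{smoothing} $\bX_t$, not of the toric resolution $Y$; it produces the integers $n_v$ on the $\check{\bX}$ side, and says nothing about the $q$-series on the $\check{Y}_q$ side. Without the open mirror theorem you have no closed-form control over those series, and your proposed ``combinatorial collapse'' cannot be carried out. The correct logic is: Theorem \ref{openGW_Fr} (via Theorem \ref{SYZ thm}) gives the target $\check{\bX}$; the open mirror theorem gives $\check{Y}_{q(\check{q})}$ in Hori--Vafa form; matching the two is then elementary.
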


The above theorem is proved by combining Theorem \ref{SYZthm1} and the open mirror theorem for a toric Calabi-Yau manifold given by \cite{CCLT13}, which gives an explicit expression of the disc potential and SYZ mirror in terms of the Hori-Vafa mirror and the mirror map.  This is similar to the strategy of proving Ruan's crepant resolution conjecture via mirror symmetry \cite{iritani09,CIT}.

Notice that \emph{the smoothings $\bX_t$ are no longer toric}.  Open Gromov-Witten invariants for non-toric cases are usually difficult to compute and only known for certain isolated cases (such as the real Lagrangian in the quintic \cite{PSW}).  This paper gives a class of non-toric manifolds that the open Gromov-Witten invariants can be explicitly computed.

In this paper we do not need Kuranishi's obstruction theory to define the invariants: there is no non-constant holomorphic sphere (Lemma \ref{no sphere}), and hence no sphere bubbling can occur.  (And no disc-bubbing occurs since we consider discs with the minimal Maslov index.) This makes the theory of open Gromov-Witten invariants much easier.  On the other hand, the local Calabi-Yau here is non-toric, and so the computation involves non-trivial arguments.  

One can also construct the mirror via Gross-Siebert program \cite{gross07} using tropical geometry of the base of the fibration, which will give the same answer.  In this case Gross-Siebert program is rather easy to run since all the walls are parallel and so they do not interact with each other.  Nevertheless, since the correspondence between tropical and symplectic geometry in the open sector is still conjectural, this paper takes the approach based on symplectic geometry instead and requires a non-trivial computation of open Gromov-Witten invariants.

There are several interesting related works to the author's knowledge.  The work by Castano-Bernard and Matessi \cite{CM} gave a detailed treatment of Lagrangian fibrations and affine geometry of the base under conifold transitions, and studied mirror symmetry in terms of tropical geometry along the line of Gross-Siebert.  For $A_n$-type surface singularities and the three-dimensional local conifold singularity $\{xy = zw\}$, SYZ construction by tropical wall-crossing was studied by Chan-Pomerleano-Ueda \cite{Chan-An,Chan-Ueda,CPU}.  Auroux \cite{auroux07,auroux09} studied wall-crossing of open Gromov-Witten invariants and the SYZ mirror of the complement of an anti-canonical divisor and gave several beautiful and illustrative examples. Abouzaid-Auroux-Katzarkov \cite{AAK} gave a beautiful treatment of SYZ for blowups of toric varieties which is useful for studying mirror symmetry for hypersurfaces in toric varieties.  Relation between open Gromov-Witten invariants of the Hirzebruch surface $\bF_2$ and its conifold transition was studied by Fukaya-Oh-Ohta-Ono \cite{FOOO10} with an emphasis on non-displaceable Lagrangian tori (which are not fibers).

A related story in the Fano setting was studied by Akhtar-Coates-Galkin-Kasprzyk \cite{ACGK}, where they considered Minkowski polynomials and mutations in relation with mirror symmetry for Fano manifolds.  This paper works with local Calabi-Yau manifolds instead, and stresses more on open Gromov-Witten invariants and SYZ constructions.


\section*{Acknowledgment}
I am grateful to N.C. Conan Leung for encouragement and useful advice during preparation of this paper.  I also express my gratitude to my collaborators for their continuous support, including K. Chan, C.-H. Cho, H. Hong, H.H. Tseng and B.-S. Wu.  The work contained in this paper was supported by Harvard University.

\section{Smoothing of toric Gorenstein singularity by Minkowski decomposition} \label{smoothing}
Let $N$ be a lattice, $M$ be the dual lattice, and $\unu \in M$ be a primitive vector.  Let $P$ be a lattice polytope in the affine hyperplane $\{v \in N_\R: \unu(v) = 1\}$.  Then $\sigma = \Cone(P) \subset N_\R$ is a Gorenstein cone.  We denote by $m$ the number of corners of the polytope $P$, and by $\tilde{m}$ the number of lattice points contained in the (closed) polytope $P$.  The corresponding toric variety $X = X_\sigma$ using $\sigma$ as the fan is a toric Gorenstein singularity.  We assume that the singularity is an isolated point.  

We choose a lattice point $v_0$ in the (closed) lattice polytope $P$ to translate it to a polytope in the hyperplane $\unu^\perp_\R \subset N_\R$, and by abuse of notation we still denote this by $P$.

By Altmann \cite{altmann}, from a Minkowski decomposition
$$P = R_0 + R_1 + \ldots + R_p$$
where $R_i$'s are convex subsets in $\unu^\perp_\R$ and $p \in \N$,  one obtains a (partial) smoothing of $X$ as follows.  Let $\hat{N} := (\unu^\perp) \oplus \Z^{p+1}$.  Define $\hat{\sigma}$ to be the cone
$$ \hat{\sigma} = \Cone \left( \bigcup_{i=0}^p \left(R_i \times \{e_i\}\right) \right) $$
where $\{e_i\}_{i=0}^p$ is the standard basis of $\Z^{p+1}$.  The total space of the family is $\bX = X_{\hat{\sigma}}$, and $\bX_t$ is defined as fibers of
$$[t_0-t_1, \ldots, t_0-t_p]:\bX = X_{\hat{\sigma}} \to \cpx^{p+1} / \cpx \langle (1,\ldots,1) \rangle$$
with $\bX_0 = X$.  Here $t_0, \ldots, t_p$ are functions corresponding to $(0,\check{e}_i) \in \hat{M}$, where $\hat{M}$ is the dual lattice to $\hat{N}$ and $\{\check{e}_i\}_{i=0}^p$ is dual to the standard basis $\{e_i\}_{i=0}^p$ of $\Z^{p+1}$.

The total space $\bX = X_{\hat{\sigma}}$ of the family is a toric variety, but a generic member $\bX_t$ is \emph{not} toric.  Each member $\bX_t$ is equipped with a K\"ahler structure $\omega_t$ induced from $\bX$ and a holomorphic volume form $\underline{\Omega}_t := (\iota_{t_1} \ldots \iota_{t_p}) \underline{\hat{\Omega}} |_{\bX_t}$, where $$\underline{\hat{\Omega}} = \der \log z_0 \wedge \ldots \wedge \der\log z_{n-1} \wedge \der t_0 \wedge \ldots \wedge \der t_p$$
is the volume form on $\bX$.

From now on we assume that $\bX_t$ is smooth for generic $t$.  In particular we assume that every summand $R_i$ is a unimodular $k_i$-simplex for some $k_i = 1, \ldots, n-1$.\footnote{Here a $k$-simplex with one of its vertex being $0$ is said to be unimodular if its vertices generates all the lattice points in the $k$-plane containing the simplex.}  The vertices of $R_i$ are $0, u^i_1,\ldots,u^i_{k_i}$ for some lattice points $u^i_j \in \unu^\perp$.

\section{Gross fibration and its variants} \label{Gfib}
Let us fix the smoothing parameter $c = [c_0, c_1, \ldots, c_p] \in \cpx^{p+1} / \cpx \langle (1,\ldots,1) \rangle$ from now on, where $c_0 = 0$, $c_1, \ldots, c_p \in \cpx$ are taken to be distinct constants, and consider $\bX_{c}$ which is assumed to be smooth.  By relabelling $c_i$'s if necessary, we assume $0 = |c_0| \leq |c_1| \leq \ldots \leq |c_p|$.  The functions $t_i$'s restricted to $\bX_{c}$ equals to $t + c_i$, where $t := t_0$.

Gross \cite{gross_examples} constructed a special Lagrangian fibration on $\bX_{c}$, which is
$$ \pi^K := (\pi_0, |t - K|^2 - K^2): \bX_c \to B$$
where
$$ B = \frac{M_\R}{\R\langle \unu \rangle} \times \R_{\geq -K^2},$$
$K \in \R_{>0}$, $\pi_0$ is the moment map of the action $T^{\perp \unu}$ on $(\bX_c, \omega_c, \Omega_c)$.  Here $\omega_c$ is the symplectic form restricted from $\bX$ to $\bX_c$, and
\begin{equation} \label{Omega}
\Omega_{c} = \iota_{t_1} \ldots \iota_{t_p} \hat{\Omega}|_{\bX_{c}} = \der \log z_0 \wedge \ldots \wedge \der \log z_{n-1} \wedge \der \log (t-K)
\end{equation}
and
\begin{equation}\hat{\Omega} = \der \log z_0 \wedge \ldots \wedge \der \log z_{n-1} \wedge \der \log (t_0-K) \wedge \der t_1 \wedge \ldots \wedge \der t_p
\end{equation}
on $\bX$, which are \emph{different} from the ones we described in the end of Section \ref{smoothing}.  $\Omega_c$ is nowhere zero and has a simple pole of order one on the boundary divisor $D_0 = \{t = K\} \subset \bX_c$.  As $K$ tends to $\infty$, $\pi^K$ tends to the fibration
$$ \pi^\infty := (\pi_0, \mathrm{Re} \,t): \bX_c \to B$$
which is no longer proper, and we will not use $\pi^\infty$ in this paper.\footnote{On the side of toric resolutions, the fibration $\pi^\infty$ and the related open Gromov-Witten invariants in dimension three has been well-studied by the theory of topological vertex \cite{LLMZ}.}

By Proposition 3.3 of \cite{gross_examples}, away from the boundary $\partial B$, the discriminant loci have codimension two, and they lie in the hyperplanes
$$H_i =  \frac{M_\R}{\R\langle \unu \rangle} \times \{ |c_i - K|^2 - K^2 \}.$$
The level sets of $|t - K|^2 - K^2$ gives a fibration of the complex $t$-plane over $\R_{\geq 0}$ by circles.  For a singular fiber of $\pi^K$, its image under $t$ is a circle centered at $K$ hitting one of the $c_i$'s.  See Figure \ref{concentric}.

\begin{figure}[htp]
\begin{center}
\includegraphics[scale=0.6]{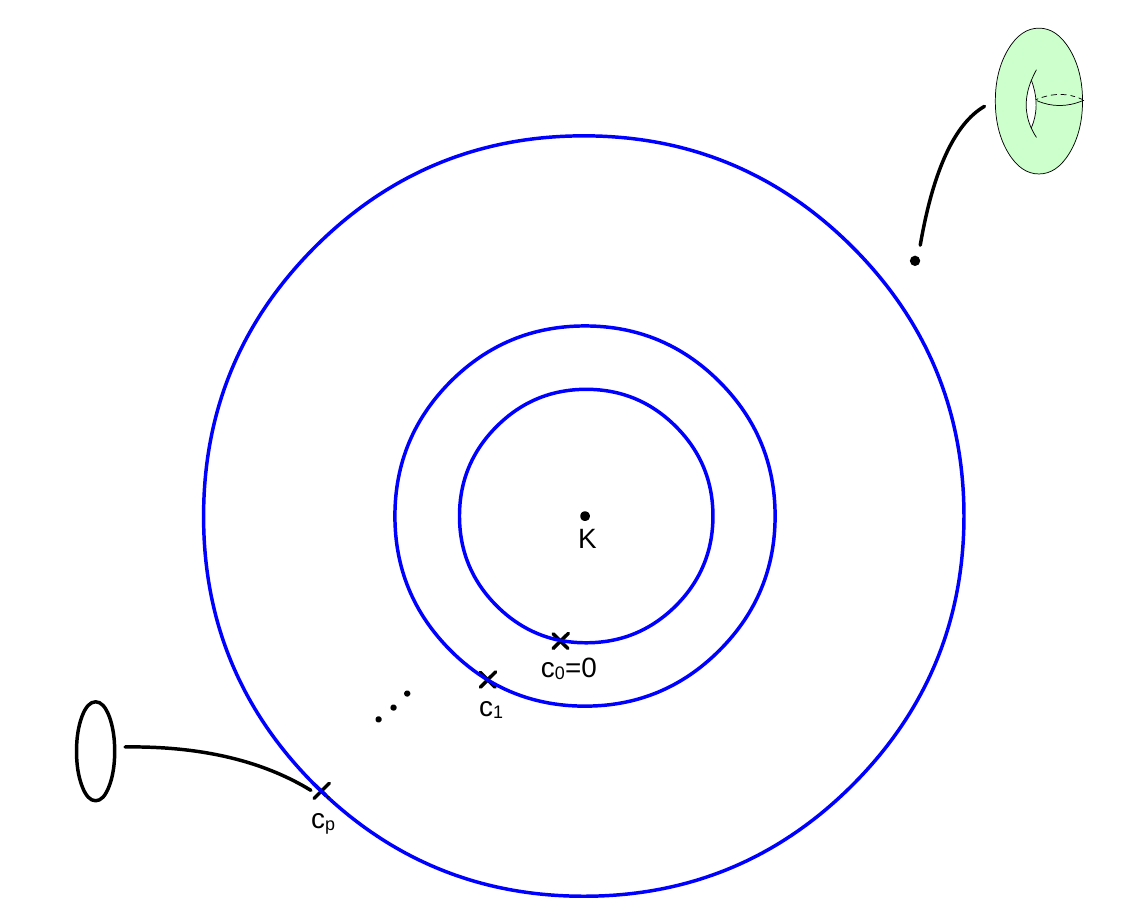}
\end{center}
\caption{The $t$ plane.  The image of a fiber of $\pi^K$ is a circle centered at $K$, and that of a singular fiber hits one of the $c_i$'s.  A torus $T^{\perp \unu}$ is attached to each point other than $c_i$'s.  At $c_i$'s the $T^{\perp \unu}$-orbit may degenerate to a smaller torus.}
\label{concentric}
\end{figure}

The boundary divisor $\pi^{-1} (\partial B) = \{t = K\}$ is denoted as $D_0$, and the discriminant loci supported in $H_i$ are denoted as $\Gamma_i$.  For a generic choice of $K$, $|c_i - K|$'s are pairwise distinct for $i=0,\ldots,p$ and so all the hyperplanes $H_i$'s and $\partial B$ are disjoint.  In the rest of this paper we assume such a choice of $K$.

We can deform the Lagrangian fibration $\pi^K$ and make all walls $H_i$ collide into one: deform the metric to $d(\cdot,\cdot)$ on the complex $t$ plane from the standard norm $|\cdot|$, so that all $c_i$'s are of the same distance to $K$ for $i=0,\ldots,p$.  Then the Lagrangian fibration $d(\cdot, K)^2 - K^2$ on the $t$-plane pulls back to a Lagrangian fibration $\underline{\pi}$ on $\bX_t$.  Since all $c_i$'s lie in the same fiber, the discriminant locus away from the boundary $\partial B$ lies in a single hyperplane $H$.  The fibers are no longer special with respect to $\Omega_c$, but this does not hurt since we are only concerned with symplectic geometry.  Interesting singular Lagrangian fibers arise for the fibration $\underline{\pi}$.  We will study the disc potential of a product torus fiber $T$ (see Definition \ref{defn:W^T} and Corollary \ref{W^T}) of $\underline{\pi}$ and its behavior under conifold transitions (Theorem \ref{thm:ct}).

For the purpose of SYZ construction along the line of \cite{CLL}, we consider the following compactification of $\bX_c$.  Add the ray generated by $-e_0 - \ldots - e_p$ to the fan $\hat{\sigma}$ and corresponding cones to produce a complete fan $\bar{\sigma}$.  Then consider the corresponding toric variety $\bar{\bX} =  X_{\bar{\sigma}}$ and fibers $\bar{\bX}_c \subset \bar{\bX}$ of $(t_1-t_0, \ldots, t_p-t_{0})$ as before, which are preserved under the action of $T^{\perp \unu}$.  The Gross fibration has the same definition $\bar{\pi}^K = (\pi_0,|t-K|^2 - K^2)$ on $\bar{\bX}_c$, which has boundary divisor $D_0 = \{t=K\}$ and $D_\infty = \{t=\infty\}$ (the pole divisor of $t$).  The meromorphic volume form $\Omega_c$ defined by the same expression \eqref{Omega} has simple poles along $D_0$ and $D_\infty$.

\section{SYZ of smoothings of toric Gorenstein singularities} \label{SYZ}

Here we follow the construction of \cite{CLL} to obtain the SYZ mirror from disc countings.  Adapted to this case, the procedures are:
\begin{enumerate}
\item Define the semi-flat mirror to be
$$\check{X}_0 = \{\textrm{flat } U(1) \textrm{ connections } \conn \textrm{ on a fiber of } \bar{\pi}^K \textrm{ over } r \in B_0 \} $$
where $B_0 = B - \Gamma$, $\Gamma$ is the discriminant locus.  We have the bundle map $\check{\pi}:\check{X}_0 \to B_0$.  We have semi-flat complex coordinates on $\check{X}_0$.  Each $v \in \unu^\perp$ corresponds to a coordinate $z^v$ on $\check{X}_0$ which is monodromy-invariant.
\item Let $H\subset B_0$ be the wall (see Definition \ref{defn:wall}).  Define the generating functions of open Gromov-Witten invariants (see Definition \ref{defn:oGW})
\begin{equation} \label{u}
\bu = \sum_{\beta \cdot D_0 = 1} n_\beta \exp \left(-\int_\beta \omega\right) \mathrm{Hol}_\conn (\partial \beta)
\end{equation}
and
\begin{equation} \label{v}
\bv = \sum_{\beta \cdot D_\infty = 1} n_\beta \exp \left(-\int_\beta \omega\right) \mathrm{Hol}_\conn (\partial \beta)
\end{equation}
on $\check{\pi}^{-1}(B_0-H)$ associated to $D_0$ and $D_\infty$.  They serve as quantum-corrected complex coordinates.
\item Take $R$ to be the ring generated by $\bu,\bv$ and $z^v$ for $v \in \unu^\perp$.  Then the SYZ mirror is defined to be $\mathrm{Spec}(R)$.
\end{enumerate}
The readers are referred to Section 2 of \cite{CLL} for more details.

Since $\pi^K$ is a special Lagrangian fibration with respect to $\Omega_c$ which has simple poles along $D_0$ (and $\bar{\pi}^K$ is special Lagrangian with respect to $\Omega_c$ which has simple poles along $D_0$ and $D_\infty$), by \cite{auroux07} we have the following formula for the Maslov index of a disc class:

\begin{lemma}[Lemma 3.1 of \cite{auroux07}] \label{mu}
The Maslov index of $\beta \in \pi_2(\bX_c,F_r)$, where $F_r$ is a fiber of $\pi^K$ over $r \in B$, is
$ \mu (\beta) = 2 \beta \cdot D_0. $
Similarly the Maslov index of $\beta \in \pi_2(\bar{\bX}_c,F_r)$, where $F_r$ is a fiber of $\bar{\pi}^K$ is
$ \mu (\beta) = 2 \beta \cdot (D_0+D_\infty). $
\end{lemma}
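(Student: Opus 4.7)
The plan is to reproduce Auroux's argument, which uses the meromorphic volume form $\Omega_c$ to compute the Maslov index via its pole divisor.

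First I would recall the general principle for special Lagrangians. If $X$ is a complex $n$-manifold equipped with a nowhere-vanishing holomorphic volume form $\Omega$, and $L \subset X$ is an oriented Lagrangian submanifold, then $\Omega$ determines a phase function $\arg \Omega|_L: L \to \R/2\pi\Z$ by sending $x \in L$ to the argument of $\Omega_x(v_1 \wedge \ldots \wedge v_n)$ for an oriented basis of $T_xL$. For a disc class $\beta \in \pi_2(X,L)$ represented by $u: (\mathbf{D}^2,\partial \mathbf{D}^2) \to (X,L)$ whose image lies in the locus where $\Omega$ is holomorphic and nowhere vanishing, a standard computation identifies the Maslov index as twice the winding number of $\arg \Omega|_L$ along $\partial \beta$. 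In particular, if $L$ is special Lagrangian with respect to $\Omega$ then this winding vanishes, confirming $\mu(\beta) = 0$ for such $\beta$.

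Second, in our setting $\Omega_c$ is only \emph{meromorphic}, with a simple pole along $D_0$ (and additionally along $D_\infty$ in the compactified case $\bar{\bX}_c$), while $F_r$ is special Lagrangian with respect to $\Omega_c$ away from these divisors. For a disc $u$ representing $\beta \in \pi_2(\bX_c, F_r)$, I would first perturb $u$ rel.\ boundary so that its interior is transverse to $D_0$; this is possible since $F_r \subset \bX_c \setminus D_0$. Then $u^* \Omega_c$ is a meromorphic top form on $\mathbf{D}^2$ with a simple pole at each transverse intersection point with $D_0$, and a local model $\der z / z$ near each such pole shows that the argument of $\Omega_c$ along a small boundary loop encircling the pole winds by exactly $+2\pi$ (with sign determined by the natural orientations). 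Summing over the punctures and using the special Lagrangian condition on the remaining boundary contribution, the winding number of $\arg \Omega_c|_{F_r}$ along $\partial \beta$ equals $\beta \cdot D_0$, yielding $\mu(\beta) = 2 \beta \cdot D_0$.

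Third, the compactified case is identical: $\Omega_c$ extends to $\bar{\bX}_c$ as a meromorphic section of the canonical bundle with simple poles on $D_0$ and $D_\infty$, and the same transversality-plus-local-model argument gives $\mu(\beta) = 2 \beta \cdot (D_0 + D_\infty)$. The main technical obstacle is the careful bookkeeping of orientations and the verification that the simple pole of $\Omega_c$ genuinely pulls back to a simple (not higher-order, not zero) pole at transverse disc intersections, but both reduce to the local model $\der z / z$ on $\C$. The remainder is just additivity of winding number over punctures of the disc, which is immediate.
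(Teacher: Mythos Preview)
Your proposal is correct and is precisely Auroux's argument. The paper itself does not prove this lemma at all: it simply states it with attribution to \cite{auroux07}, noting only that $\pi^K$ (resp.\ $\bar{\pi}^K$) is special Lagrangian with respect to the meromorphic form $\Omega_c$ with simple poles along $D_0$ (resp.\ $D_0 \cup D_\infty$), which is exactly the input your argument uses.
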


In the following lemma, we see that there is no holomorphic sphere in $\bX_c$ (or $\bar{\bX}_c$).  This simplifies a lot the moduli theory for open Gromov-Witten invariants because no sphere bubbling occurs.

\begin{lemma} \label{no sphere}
There is no non-constant holomorphic map from $\bP^1$ to $\bX_c$ (or $\bar{\bX}_c$). 
\end{lemma}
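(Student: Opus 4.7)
My approach is to treat $\bX_c$ and $\bar{\bX}_c$ in turn, reducing both to the affineness of the ambient toric variety $\bX = X_{\hat{\sigma}}$.

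For $\bX_c$: since $\hat{\sigma}$ is a single strongly convex rational cone, $\bX = \mathrm{Spec}\,\C[\hat{\sigma}^\vee \cap \hat{M}]$ is affine, hence Stein. A Stein space contains no positive-dimensional compact analytic subvariety, so every holomorphic map $\bP^1 \to \bX$ is constant, and the same holds for any holomorphic map $\bP^1 \to \bX_c$ since $\bX_c$ is closed in $\bX$.

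For $\bar{\bX}_c$, $\bar{\bX}$ is complete rather than affine, so I would reduce to the affine picture via the morphism $t = t_0 \colon \bar{\bX}_c \to \bP^1$. This comes from the toric map $\bar{\bX} \to \bP^{p+1}$ induced by the lattice projection $\hat{N} \twoheadrightarrow \Z^{p+1}$, after restriction to the rational line $\ell \cong \bP^1 \subset \bP^{p+1}$ parametrized by $t \mapsto (t, t + c_1, \ldots, t + c_p)$, onto which $\bar{\bX}_c$ is mapped. Given a holomorphic $f \colon \bP^1 \to \bar{\bX}_c$, I analyze $t \circ f \colon \bP^1 \to \bP^1$. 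If $t \circ f \equiv t_* \in \C$, then $f$ factors through the fiber $t^{-1}(t_*) \subset \bX_c \subset \bX$, which is closed in the affine variety $\bX$, hence Stein, so $f$ is constant. If $t \circ f \equiv \infty$, then $f$ factors through $D_\infty^c := D_\infty \cap \bar{\bX}_c$, which is a closed subvariety of the toric divisor $D_\infty \subset \bar{\bX}$ associated to the added ray $\rho = (0, -e_0 - \ldots - e_p)$; a fan-theoretic computation (using the quotient lattice $\hat{N}/\Z\rho$ and the cones of $\bar{\sigma}$ containing $\rho$) identifies $D_\infty^c$ as a closed subvariety of an affine toric variety of smaller dimension, and the argument for $\bX_c$ applies again.

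The remaining and hardest case is when $t \circ f$ is non-constant: then $f$ is a degree-$d$ cover of $\ell$ for some $d \geq 1$, with intersection numbers $f \cdot D_0 = f \cdot D_\infty = d$. To exclude this, I would combine the toric-curve description of the Mori cone of $\bar{\bX}$---every effective compact $1$-cycle is a non-negative combination of closures $V(\tau)$ of one-parameter torus orbits---with the requirement that $f$ lies in $\bar{\bX}_c$, i.e.\ each $t_i - t_j$ restricts to the constant $c_i - c_j$ along $f$. Translating into combinatorics of the fan $\bar{\sigma}$ (whose rays are $(u^i_l, e_i)$ together with $\rho$), the only walls $\tau$ compatible with this constancy requirement produce curves $V(\tau)$ already contained in a single fiber of $t$, which have been handled by the earlier sub-cases. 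This closes the argument.
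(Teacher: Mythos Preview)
Your treatment of $\bX_c$ is correct and cleaner than the paper's: you invoke affineness of the ambient $\bX=X_{\hat\sigma}$ directly, whereas the paper first composes with $t:\bX_c\to\C$ and then appeals to $H_2(t^{-1}\{a\})=0$ to kill the sphere inside a fibre. Both routes amount to the Stein property, but yours avoids the intermediate step.

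For $\bar{\bX}_c$, your case split is the right shape, but the Mori-cone step in the non-constant case has a genuine gap. Writing $f_*[\bP^1]$ as a non-negative combination of the toric classes $[V(\tau)]$ in $\bar\bX$ is a statement about homology only; it does not force the actual image curve $f(\bP^1)$ to be torus-invariant, so you cannot feed the pointwise constraint $t_i-t_j\equiv c_i-c_j$ on $f$ back into a combinatorial condition on which walls $\tau$ may contribute. The paper's own argument has the same blind spot here: it asserts that $t\circ u$ is ``a holomorphic function on $\bP^1$ which can only be constant'', tacitly treating $t$ as $\C$-valued, whereas on $\bar{\bX}_c$ the map $t$ takes values in $\bP^1$ and Liouville does not apply.

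In fact the assertion for $\bar{\bX}_c$ seems to fail as written. In the $A_1$ example one has $\bX_c=\{zw=t(t+c_1)\}$, and the character $z$ pairs non-negatively with every ray of $\bar\sigma$ (including $\rho=(0,-1,-1)$), hence extends to a global regular function on $\bar{\bX}$. For any $z_0\in\C^\times$ the locus $\{z=z_0\}\cap\bar{\bX}_c$ is the affine line $t\mapsto(z_0,\,t(t+c_1)/z_0,\,t)$ together with the single point of $D_\infty\cap\bar{\bX}_c$ lying over $t=\infty$; in the chart $U_\rho$ (coordinates $Z=z$, $R=t_0/t_1$, $S=1/t_1$) this is $\{Z=z_0,\ R=1-c_1 S\}$, visibly smooth through $S=0$, so the curve closes up to a copy of $\bP^1$ on which $t$ has degree one. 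Thus the ``hardest case'' you isolated genuinely occurs and cannot be excluded. Fortunately any such sphere meets both $D_0$ and $D_\infty$, so it cannot bubble off a Maslov-index-two disc, and the uses of the lemma later in the paper are unaffected.
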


\begin{proof}
Suppose $u: \bP^1 \to \bX_c$ (or $u: \bP^1 \to \bar{\bX}_c$) is a holomorphic map.  Then $t \circ u$ is a holomorphic function on $\bP^1$ which can only be constant.  Then the image of $u$ lies in the level set of $t$.  But $H_2(t^{-1}\{a\}) = 0$ for all $a \in \cpx$, and hence $u$ can only be a constant.
\end{proof}

For each $i$, we may identify $H_i$ with $M_\R / \R\langle \unu \rangle$.  Then the discriminant locus $\Gamma_i \subset H_i$ is the normal fan of the simplex $R_i$.  Thus $H_i - \Gamma_i$ consists of $k_i+1$ components, which are one-to-one corresponding to the vertices of the standard simplex $R_i$.  Each chamber is (up to translation) the dual cone of $(R_i)_{v}$, where $v$ is the corresponding vertex of $R_i$.

From now on we assume that $|c_{i+1} - K| \geq |c_{i} - K|$ for all $i = 0, \ldots, p-1$, and $K>0$ is chosen generically such that all the hyperplanes $H_i$ for $i = 1,\ldots,p$ are distinct.

To obtain a trivialization of the torus bundle $(\pi^K)^{-1}(B_0) \to B_0$, we fix a chamber in $H_i$ for each $i$.  Without loss of generality for each $i$, we fix the chamber corresponding to the vertex $0$ of the simplex $R_i$.  This gives a simply-connected open set $U \subset B_0$ and $\bX_t$ over $U$ is a trivial torus bundle.  The normal vectors to the facets of the chamber are $u^i_1,\ldots,u^i_{k_i} \in \unu^\perp$.  We always use this trivialization in the rest of the paper.

Now we consider open Gromov-Witten invariants bounded by a fiber of $\pi^K$ above points in $U \subset B$.  The invariants are well-defined when the fiber has minimal Maslov index two.  Recall the following definition of wall from \cite{CLL}:

\begin{defn}[Wall] \label{defn:wall}
The wall $H$ of the Lagrangian fibration $\pi^K: \bX_t \to B$ (or $\bar{\pi}^K$) is the set
$$ \{ r \in B_0: F_r \textrm{ bounds non-constant holomorphic discs with Maslov index zero} \}.$$
\end{defn}

Away from the wall, the one-pointed genus-zero open Gromov-Witten invariants are well-defined, and we can use them to cook up the SYZ mirror.

\begin{defn}[Open Gromov-Witten invariants] \label{defn:oGW}
Let $r \in B_0 - H$ be away from wall $H$, and $\beta \in \pi_2(\bar{\bX}_t,F_r)$ a disc class bounded by the fiber $F_r$.  We have the moduli space $\CM_1(\beta)$ of stable discs with one boundary marked point representing $\beta$.  The open Gromov-Witten invariant associated to $\beta$ is
$$ n_\beta = \int_{\CM_1(\beta)} \ev^*[\pt] $$
where $\ev:\CM_1(\beta) \to F_r$ is the evaluation map at the boundary marked point.
\end{defn}

The open Gromov-Witten invariant $n_\beta$ is non-zero only when the Maslov index $\mu(\beta)$ is two.  Moreover, by Lemma \ref{no sphere}, away from the wall there is no disc bubbling and sphere bubbling in the moduli.  Thus $\CM_1(\beta)$ consists of (classes of ) holomorphic maps $(\Delta,\partial\Delta) \to (X,F_r)$.  No virtual perturbation theory is needed in this situation.

In this case the wall is a union of disjoint hyperplanes as stated in the following proposition.

\begin{prop} \label{prop:wall}
The wall of the Lagrangian fibration $\pi^K: \bX_t \to B$ (or $\bar{\pi}^K$) is the union of the hyperplanes $H_i$ for $i = 0,\ldots,p$, which are disjoint from each other.
\end{prop}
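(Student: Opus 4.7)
The plan is to establish the two set-theoretic inclusions wall $\subseteq \bigcup_i H_i$ and $\bigcup_i H_i \subseteq$ wall, together with the disjointness of the $H_i$.

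First I tackle wall $\subseteq \bigcup_i H_i$. Let $r \in B_0$ be in the wall and pick a non-constant holomorphic disc $u:(\Delta,\partial\Delta)\to(\bar{\bX}_c,F_r)$ of Maslov index zero representing a class $\beta$. By Lemma \ref{mu} and positivity of intersection, $\beta\cdot D_0 = \beta\cdot D_\infty = 0$, so the image of $u$ avoids $D_0\cup D_\infty$. Hence $t\circ u:\Delta\to\C$ is a well-defined holomorphic function whose boundary lands on the circle $C_r := \{|z-K|^2 = r_2+K^2\}$, where $r_2\in\R_{\ge -K^2}$ denotes the second coordinate of $r$. I claim $t\circ u$ must be constant. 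Otherwise, by maximum principle applied to the subharmonic function $|t\circ u-K|^2$, the image $t(u(\Delta))$ lies in the closed disc bounded by $C_r$; and since $t\circ u$ is non-constant and holomorphic, the open mapping theorem together with the argument principle (the winding number of $t\circ u|_{\partial\Delta}$ around any point inside $C_r$ equals the number of preimages) force $t(u(\Delta))$ to cover the open disc bounded by $C_r$. In particular $K$ lies in the image, contradicting $u(\Delta)\cap D_0 = \emptyset$.

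Therefore $t\circ u\equiv a$ for some $a\in C_r$, and $u$ maps into the level set $\Sigma_a := \{t=a\}\cap \bar{\bX}_c$. When $a\ne c_i$ for every $i$, the level set $\Sigma_a$ is a principal free $T^{\perp\unu}$-orbit over the interior of $\pi_0(\Sigma_a)$ and is thus biholomorphic to an open subset of the algebraic torus $(\C^*)^{n-1}$, on which $F_r\cap\Sigma_a$ is a moment-map Lagrangian torus. The moduli of the standard character coordinates are subharmonic and constant on this Lagrangian torus, so the maximum/minimum principles force each coordinate of $u$ to be constant, contradicting non-constancy. Hence $a=c_i$ for some $i$, which forces $r_2 = |c_i-K|^2 - K^2$ and therefore $r\in H_i$.

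For the reverse inclusion, I need to exhibit non-constant Maslov zero discs bounding $F_r$ for each $r\in H_i$. These live inside the singular level set $\Sigma_{c_i}$. The unimodularity of the simplex $R_i$ implies that at each $T^{\perp\unu}$-orbit of $\Sigma_{c_i}$ indexed by a vertex of $R_i$ one obtains an ordinary double point (conifold-type singularity), and near such a node the standard vanishing $S^1$ bounds a holomorphic disc inside $\Sigma_{c_i}$ whose boundary lies on $F_r \cap \Sigma_{c_i}$. Since the disc is disjoint from $D_0$ and $D_\infty$, its Maslov index is zero by Lemma \ref{mu}. The explicit disc classes $\beta^i_l$ arising this way (one per vertex of $R_i$, matching the chamber structure of $H_i \setminus \Gamma_i$) are the Maslov zero families that get counted in the subsequent open Gromov-Witten computations.

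Finally, disjointness of $H_i$ and $H_j$ for $i\ne j$ is immediate from the standing genericity hypothesis on $K$, which guarantees $|c_i-K|\ne |c_j-K|$ whenever $i\ne j$. The main obstacle is the existence direction: one must verify that the local conifold picture at each node of $\Sigma_{c_i}$ really produces the required vanishing-cycle holomorphic disc bounded by a product torus fiber, with the correct chamber labelling. By contrast, the exclusion of Maslov zero discs over $B_0\setminus\bigcup_i H_i$ is a clean two-step argument using holomorphicity of $t$, the argument principle in the $t$-plane, and the maximum principle on the algebraic torus $(\C^*)^{n-1}$.
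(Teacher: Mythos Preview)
Your argument for the inclusion $\mathrm{wall}\subseteq\bigcup_i H_i$ is essentially the paper's proof, only spelled out more carefully: where the paper writes ``$(t-K)\circ u$ is never zero and $|t-K|\circ u$ is constant on $\partial\Delta$, so by the maximum principle $t$ is constant; then by topology $t\in\{c_0,\ldots,c_p\}$,'' you give the argument-principle version for constancy of $t$ and replace ``by topology'' with the explicit maximum-principle argument on the generic level set $(\C^\times)^{n-1}$. These are the same proof.

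Where you differ is that you also attempt the reverse inclusion $\bigcup_i H_i\subseteq\mathrm{wall}$, which the paper's proof simply omits. Your sketch here is not quite right as stated. The level set $\Sigma_{c_i}=\{t=c_i\}$ is not usefully described via ``ordinary double points'' or a ``vanishing $S^1$''; rather, since $R_i$ is a unimodular $k_i$-simplex, $\Sigma_{c_i}$ is a toric variety on which $T^{\perp\unu}$ acts, and for $r\in H_i\setminus\Gamma_i$ the intersection $F_r\cap\Sigma_{c_i}$ is a regular moment-map $(n-1)$-torus in $\Sigma_{c_i}$. The Maslov-zero discs are then the basic \emph{toric} discs of Cho--Oh type in $\Sigma_{c_i}$ (these are exactly the classes $\beta^i_j$ introduced immediately after the proposition), and they exist for every $r$ in each chamber of $H_i\setminus\Gamma_i$, not just near the nodes. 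With that correction the existence direction goes through cleanly. Your remark on disjointness of the $H_i$ via the generic choice of $K$ matches the paper's standing assumption.
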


\begin{proof}
Suppose $F_r$ bounds a non-constant holomorphic disc $u: (\Delta, \partial \Delta) \to (\bX_c, L)$ with Maslov index zero.  By the Maslov index formula (Lemma \ref{mu}), the disc does not hit the divisor $D_0$.  This means the holomorphic function $(t - K) \circ u$ is never zero.  Since $|t - K| \circ u$ is constant on $\partial \Delta$, by the maximal principle it follows that $t$ is a constant.  Thus the disc $u$ lies in the level set of $t$, and by topology this forces $t = 0, c_1,\ldots,c_p$.  This implies $r$ lies in one of the hyperplanes $H_i$'s.
\end{proof}

We need to identify all the disc classes in order to compute their open Gromov-Witten invariants.  The next definition gives a label to every basic disc class.

\begin{defn}[Disc classes]
Let $F_r$ be a fiber of the Lagrangian fibration $\bar{\pi}^K$ contained in the trivialization $(\bar{\pi}^K)^{-1}(U)$.  The disc class of Maslov index two emanated from the boundary divisor $D_0$ (or $D_\infty$) is denoted as $\beta_0$ (or $\beta_\infty$ respectively).  Moreover, each discriminant locus $\Gamma_i \subset H_i$ gives rise to $k_i$ disc classes of Maslov index zero, which are in one-to-one correspondence with the normal vectors $u^i_1,\ldots,u^i_{k_i} \in \unu^\perp$ to the facets of the chamber, and they are denoted as $\beta^i_j$ for $j = 1, \ldots, k_i$.

The disc class $\beta_0$ gives a local cooordinate function $z_0$ on the semi-flat mirror $\check{\pi}^{-1}(U) \subset \check{X}_0$ (which is not monodromy invariant):
$$ z_0 := \exp \left( -\int_\beta \omega \right) \mathrm{Hol}_{\conn} (\partial\beta).$$
\end{defn}

All stable disc classes bounded by $F_r$ of Maslov index two must be of the form
$$ \beta_0 + \sum_{i=0}^p \sum_{j=1}^{k_i} n^i_j \beta^i_j \,\,\textrm{ or }\,\, \beta_\infty + \sum_{i=0}^p \sum_{j=1}^{k_i} n^i_j \beta^i_j $$ 
where $n^i_j \in \Z_{\geq 0}$.  In the following theorem we classify all the stable disc classes of Maslov index two and compute their open Gromov-Witten invariants.  In the toric case, an analogous result is obtained by \cite{cho06}.  Since here the symplectic manifold $\bX_t$ (or $\bar{\bX}_t$) is non-toric and the Lagrangian-fibration $\pi^K$ (or $\bar{\pi}^K$) has interior discriminant loci, we need a non-trivial argument here.

\begin{thm} \label{openGW_Fr}
Let $r$ be a regular value between the walls $H_l$ and $H_{l+1}$ of the Lagrangian fibration $\bar{\pi}^K$, and let
$$ \beta_0 + \sum_{i=0}^p \sum_{j=1}^{k_i} n^i_j \beta^i_j \,\,\textrm{ or }\,\, \beta_\infty + \sum_{i=0}^p \sum_{j=1}^{k_i} n^i_j \beta^i_j $$
be a disc class which has Maslov index two bounded by $F_r$.  Then the moduli space $\CM_1(\beta)$ is non-empty only when $n^i_j = 0$ for all $i,j$ except for each $0 \leq i \leq l$, there could be at most one $j = 1,\ldots, k_i$ with $n^i_j = 1$.  In such a case the open Gromov-Witten invariant is $n_{\beta} = 1$.
\end{thm}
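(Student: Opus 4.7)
The plan is to project each holomorphic disc via $t: \bar{\bX}_c \to \C$ and reduce the classification to a Blaschke analysis on the $t$-plane, then lift using the $T^{\perp\unu}$-equivariant toric geometry of the ambient $\bar{\bX}$.

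Given a Maslov-two holomorphic disc $u: (\Delta, \partial\Delta) \to (\bar{\bX}_c, F_r)$, set $f := t \circ u$. Its boundary maps onto the circle $C_\rho = t(F_r) = \{w \in \C : |w - K| = \rho\}$ for some $\rho > 0$. Applying the maximum modulus principle to $f - K$ when $\beta \cdot D_\infty = 0$, or to $1/(f - K)$ when $\beta \cdot D_0 = 0$ (as in the proof of Proposition \ref{prop:wall}), shows $f$ must be nonconstant (a constant $f$ would yield a Maslov-zero disc) and its image lies in the closure of exactly one of the two components of $\bar{\C} \setminus C_\rho$. Call these $\Delta^-$ (containing $K, c_0, c_1, \ldots, c_l$) and $\Delta^+$ (containing $c_{l+1}, \ldots, c_p, \infty$). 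Lemma \ref{mu} then gives a dichotomy: either $\beta \cdot D_0 = 1$, whence $f: \Delta \to \overline{\Delta^-}$ is a degree-one M\"obius map with a simple preimage of $K$; or $\beta \cdot D_\infty = 1$, whence $f$ has a single simple pole and $1/(f - K)$ is a degree-one M\"obius, forcing $f(\Delta) \subseteq \overline{\Delta^+}$.

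As $f$ is M\"obius, it has exactly one simple preimage at every interior point of its image region. Hence $u$ intersects $t^{-1}(c_i)$ exactly once for each $c_i$ in the image region, and not at all for $c_i$ in the other; this yields $n^i_j = 0$ for all $i > l$ in the $\beta_0$ case (and the symmetric vanishing for $\beta_\infty$). At each intersection with $t^{-1}(c_i)$, the disc $u$ lies in one irreducible component of $t^{-1}(c_i) \subset \bar{\bX}_c$, labeled by a choice of vertex $j(i) \in \{0, 1, \ldots, k_i\}$ of the simplex $R_i$. Using the trivialization convention that vertex $0$ is the reference (contributing no $\beta^i_j$), each choice of $j(i) \ge 1$ contributes $\beta^i_{j(i)}$ to the relative class. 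This produces precisely the classes $\beta_0 + \sum n^i_j \beta^i_j$ (or $\beta_\infty + \sum n^i_j \beta^i_j$) with $n^i_j \in \{0, 1\}$ and at most one $j$ per $i$.

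For each allowed class, the holomorphic disc is unique modulo $\Aut(\Delta)$: the M\"obius $f$ is determined up to domain reparametrization by its special preimage, and the $T^{\perp\unu}$-equivariant lift $u$ is pinned down by the vertex choices $j(i)$ together with the boundary homology class on $F_r$. The evaluation map is then a degree-one map onto $F_r$, yielding $n_\beta = 1$. The main technical obstacle is the local identification at each $t^{-1}(c_i)$ of the irreducible-component choice with the disc class $\beta^i_{j(i)}$: this requires writing $\bar{\bX}$ in toric coordinates near each component, so that the boundary winding contribution in $\unu^\perp$ can be read off via a Cho-Oh-type computation, exploiting the unimodularity of $R_i$.
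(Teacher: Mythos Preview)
Your projection to the $t$-plane and the M\"obius/winding analysis of $f = t\circ u$ is essentially the same argument the paper uses to obtain the constraint $\sum_j n^i_j \le 1$ for $i\le l$ and $n^i_j=0$ for $i>l$. The paper phrases it as ``the winding number of $t\circ u$ around each $c_i$ (for $i\le l$) is $1$, while $u$ meets the critical locus over $\Gamma_i$ at least $\sum_j n^i_j$ times''; your phrasing via finite Blaschke products with a single zero is equivalent.

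Where you diverge from the paper, and where there is a genuine gap, is the final step. Your sentence ``the holomorphic disc is unique modulo $\Aut(\Delta)$'' is false as written: for fixed $f$, each transverse coordinate $\zeta_j$ is a Blaschke product with prescribed zeros times an arbitrary phase, so there is an $(n-1)$-torus of discs in $\CM_0(\beta)$, not a single one. What is true is uniqueness modulo $\Aut(\Delta)$ \emph{and} the $T^{\perp\unu}$-action; from there one still has to argue that the $T^{\perp\unu}$-equivariant evaluation $\CM_1(\beta)\to F_r$ is a diffeomorphism of the expected dimension and is orientation-preserving, i.e.\ that the moduli is unobstructed. You assert degree one but do not establish regularity.

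The paper avoids this issue by a different device: it constructs, for each $i$, an explicit isomorphism $\CM_1(\beta)\cong\CM_1(\beta_0)$ by dividing out a single Blaschke factor $\bigl(\tfrac{z-z_0}{1+\bar z_0 z}\bigr)$ from the coordinate $\zeta_j$ at the unique point $z_0$ where $t(z_0)=c_i$. Iterating over $i$ strips off all the $\beta^i_j$ contributions. For $\beta_0$ itself, no bubbling occurs even at the walls, so $\CM_1(\beta_0)$ agrees with the moduli for a fiber below $H_0$, where the situation is literally the toric one in $\C\times(\C^\times)^{n-1}$ and Cho--Oh gives $n_{\beta_0}=1$ together with regularity. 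This buys you the transversality for free and makes the ``technical obstacle'' you flag (matching components of $t^{-1}(c_i)$ with the classes $\beta^i_j$) into an explicit coordinate calculation rather than an abstract intersection-number argument.
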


\begin{proof}
First consider the case $\beta_0 + \sum_{i=0}^p \sum_{j=1}^{k_i} n^i_j \beta^i_j$.

It follows from the maximal principle that the moduli space $\CM_1(\beta)$ is non-empty only when $n^i_j = 0$ for all $i > l$.

Since there is no holomorphic sphere in $\bX_c$ (Lemma \ref{no sphere}), no sphere bubbling occurs in the moduli.  Also there is no disc bubbling because the fiber under consideration is not lying over the walls.  Thus elements in $\CM_1(\beta)$ are holomorphic maps $u:(\Delta, \partial \Delta) \to (\bX_t,F_r)$.  Using local coordinates $(\zeta_1,\ldots,\zeta_{n-1},t)$ of $\bX_c$ (where $\zeta_1,\ldots,\zeta_{n-1}$ corresponds to a basis of $M / \Z\langle \unu \rangle$), the map $u$ can be written as $(\zeta_1(z),\ldots,\zeta_{n-1}(z),t(z))$ for $z \in \Delta$.  Moreover $F_r$ is defined by the equations
$$|\zeta_1|^2 = r_1, \ldots, |\zeta_{n-1}|^2 = r_{n-1}, |t-K|^2 = K^2 + r_n.$$

Consider the holomorphic function $t \circ u$ on the disc $\Delta$.  Since $u$ represents $\beta_0 + \sum_{i=0}^p \sum_{j=1}^{k_i} n^i_j \beta^i_j$, the winding number of $t \circ u$ around $K$ is $1$.  Since $r$ is lying between the walls $H_l$ and $H_{l+1}$, the winding numbers of $t \circ u$ around $c_0 = 0$ and $c_1, \ldots, c_l$ are also $1$.  Thus $t \circ u$ attains each value $c_0, \ldots, c_l$ exactly once.  On the other hand, $u$ hits the critical fibers over $\Gamma_i$ for $\sum_{j=1}^{k_i} n^i_j$ times (counting with multiplicity), and so $t \circ u$ attains the value $c_i$ for at least $\sum_{j=1}^{k_i} n^i_j$ times.  This forces $n^i_j$ is either zero or one, and among $n^i_1,\ldots, n^i_{k_i}$'s at most one of them is one for each $i$.


Suppose this is the case.  Then we equate $\CM_1(\beta)$ and $\CM_1(\beta_0)$ as follows.  Let $u:(\Delta, \partial \Delta) \to (\bX_t,F_r)$ represent the class $\beta$.  For $i=0$, extend the vertices $u^0_1,\ldots, u^0_{k_0}$ of the polytope $R_0$ to a basis of $\unu^\perp$ and let $\zeta_1,\ldots,\zeta_{n-1}$ be functions corresponding to the dual basis of $M / \Z\langle \unu \rangle$.  $u$ can be written as $(\zeta_1(z),\ldots,\zeta_{n-1}(z),t(z))$ in terms of these coordinates.  If $n^0_j = 1$ for a certain $j = 1, \ldots, k_0$ (otherwise we do nothing), then there exists $z_0 \in \Delta^{\circ}$ such that $\zeta_{j}(z_0) = 0$ and $t(z_0) = 0$.  Define
$$ \tilde{u}(z) = \left(\zeta_1(z),\ldots, \left(\frac{z-z_0}{1+\bar{z_0} z}\right)^{-1} \zeta_{j}(z) ,\ldots,\zeta_{n-1}(z),t(z)\right). $$
Then $\tilde{u}$ does not hit any singular fibers at $\Gamma_0$ and hence belongs to the moduli space $\CM_1(\beta_0 + \sum_{i=1}^p \sum_{j=1}^{k_i} n^i_j \beta^i_j)$.  Conversely let $\tilde{u}:(\Delta, \partial \Delta) \to (\bX_t,F_r)$ represent the class $\beta_0 + \sum_{i=1}^p \sum_{j=1}^{k_i} n^i_j \beta^i_j$.  By the above analysis on winding number of $t$, there exists exactly one $z_0 \in \Delta^{\circ}$ such that $t(z_0) = 0$.  Then define
$$ u(z) = \left(\zeta_1(z),\ldots, \left(\frac{z-z_0}{1+\bar{z_0} z}\right) \zeta_{j}(z) ,\ldots,\zeta_{n-1}(z),t(z)\right) $$
which belongs to $\CM_1(\beta)$.  This set up an isomorphism $$\CM_1(\beta) \cong \CM_1\left(\beta_0 + \sum_{i=1}^p \sum_{j=1}^{k_i} n^i_j \beta^i_j\right).$$
Do the same thing for $i=1,\ldots, p$, inductively we obtain an isomorphism between $\CM_1(\beta)$ and $\CM_1(\beta_0)$.

Now for the class $\beta_0$, by the maximum principle any holomorphic map in $\beta_0$ have coordinates $z_1, \ldots, z_{n-1}$ being constants.  The moduli space $\CM_1(\beta_0)$ does not have disc bubbling even for a Lagrangian fiber at the walls $H_i$'s, and thus it remains to be the same under wall-crossing.  This means $\CM_1(\beta_0)$ for $F_r$ between the walls $H_l$ and $H_{l+1}$ is the same as that for a fiber below the wall $H_0$, which is the same as a toric fiber in $\cpx \times (\cpx^\times)^{n-1}$. it follows that $n_{\beta_0} = 1$.  Since $\CM_1(\beta)\cong\CM_1(\beta_0)$, we also have $n_{\beta} = 1$.

For $\beta = \beta_\infty + \sum_{i=0}^p \sum_{j=1}^{k_i} n^i_j \beta^i_j$, we consider the winding number of $t$ around $\infty$ instead of around $K$, and the argument is exactly the same.
\end{proof}

Now we are ready to compute the SYZ mirror of $\pi: \bX_t \to B$.

\begin{thm} \label{SYZ thm}
By the construction defined in the beginning of this section, the SYZ mirror of the Lagrangian fibration $\pi: \bX_t \to B$ is
$$\check{\bX} = \{((\bu,\bv),z_1,\ldots,z_{n-1}) \in \cpx^2 \times (\cpx^*)^{n-1}: \bu \bv = g(z)\}$$
where
\begin{align*}
g(z) &= \prod_{i=0}^p \left(1 + \sum_{l = 1,\ldots,k_i} z^{u^i_l} \right) \\
&= \sum_{v \in P \cap \unu^\perp} n_v z^v
\end{align*}
for some explicit positive integers $n_v$ attached to each $v \in P \cap \unu^\perp$, and $n_v = 1$ for all vertices $v$ of $P$.  Notice that this is independent of the deformation parameter $t$.
\end{thm}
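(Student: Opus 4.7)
The plan is to substitute the classification of Maslov--index--two disc classes from Theorem \ref{openGW_Fr} directly into the definitions \eqref{u} and \eqref{v} of $\bu$ and $\bv$, multiply, and read off the answer. Fix a chamber of $B_0 - H$ between $H_l$ and $H_{l+1}$. For $\bu$, Theorem \ref{openGW_Fr} says that the non-trivial contributions come from classes $\beta_0 + \sum_{i=0}^{l}\epsilon_i \beta^i_{j_i}$ with $\epsilon_i \in \{0,1\}$ and $j_i \in \{1,\ldots,k_i\}$ when $\epsilon_i = 1$, each with $n_\beta = 1$. Writing $z_0 := \exp(-\int_{\beta_0}\omega)\,\mathrm{Hol}_\conn(\partial\beta_0)$ and $z^{u^i_j} := \exp(-\int_{\beta^i_j}\omega)\,\mathrm{Hol}_\conn(\partial\beta^i_j)$ (monodromy-invariant by the trivialization chosen just before the theorem), multiplicativity of area and holonomy along disc class sums yields
$$\bu = z_0 \prod_{i=0}^{l}\left(1 + \sum_{j=1}^{k_i} z^{u^i_j}\right).$$
The analogous argument, applied to $\beta_\infty$ with the winding of $t\circ u$ measured around $\infty$ in place of $K$, shows that $\bv$ only receives contributions from $\beta^i_j$ with $i > l$, producing
$$\bv = q_t\, z_0^{-1} \prod_{i=l+1}^{p}\left(1 + \sum_{j=1}^{k_i} z^{u^i_j}\right),$$
where $q_t := \exp(-\int_{\beta_0+\beta_\infty}\omega)$ is the symplectic area of the $\bP^1$ in the $t$-direction of $\bar{\bX}_c$ and the $z_0^{-1}$ reflects the opposite orientation of $\partial\beta_\infty$ relative to $\partial\beta_0$.

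Multiplying the two, the $z_0^{\pm 1}$ cancel and, after absorbing the constant $q_t$ into the normalization (equivalently rescaling $\bv \mapsto q_t^{-1}\bv$), I obtain
$$\bu\,\bv = \prod_{i=0}^p\left(1 + \sum_{j=1}^{k_i} z^{u^i_j}\right),$$
which is the first displayed form of $g(z)$. For the second form, I would observe that each factor $1 + \sum_{j=1}^{k_i} z^{u^i_j}$ is the monomial generating function of the unimodular simplex $R_i$ (with $0$ as one vertex), so by the defining property of a Minkowski sum the Newton polytope of the product is $R_0 + \cdots + R_p = P$. Expanding, the coefficient $n_v$ of $z^v$ counts the representations $v = \sum_{i=0}^{p} u^i_{j_i}$ with $u^i_{j_i} \in \{0, u^i_1, \ldots, u^i_{k_i}\}$, and at a vertex $v$ of $P$ the Minkowski decomposition is unique, forcing $n_v = 1$.

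The step requiring the most care is chamber independence: the expressions above depend on $l$, yet the SYZ mirror (the spectrum of the ring $R$ generated by $\bu, \bv, z^v$) must be globally well-defined. For this I would invoke the wall-crossing formula of \cite{auroux07}: on crossing the wall $H_i$, the semi-flat coordinate $z_0$ transforms by multiplication with $\bigl(1 + \sum_{j=1}^{k_i} z^{u^i_j}\bigr)^{\pm 1}$, which is precisely the factor that migrates between the $\bu$-product and the $\bv$-product as $l$ changes by one. Hence $\bu$ and $\bv$ glue to globally defined functions, the relation $\bu\,\bv = g(z)$ is preserved across all chambers, and $\check{\bX} = \mathrm{Spec}\,R = \{\bu\bv = g(z)\}\subset \C^2 \times (\C^*)^{n-1}$. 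Since $t$ entered only through $q_t$ (now absorbed), the mirror is manifestly independent of the deformation parameter. The bulk of the remaining work is careful bookkeeping of orientations and of the factor $q_t$; the rest is direct substitution.
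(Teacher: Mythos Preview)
Your proposal is correct and follows essentially the same route as the paper: substitute the disc classification of Theorem \ref{openGW_Fr} into the defining sums \eqref{u} and \eqref{v}, multiply, and read off $\bu\bv = g(z)$. The paper simply lists $\bu$ and $\bv$ as tuples indexed by the chambers (so the relation is visibly chamber-independent), whereas you fix one chamber and then argue gluing via Auroux's wall-crossing; these are two presentations of the same computation. Your treatment is in fact more explicit on two points the paper glosses over: the appearance and absorption of the area constant $q_t = \exp\bigl(-\int_{\beta_0+\beta_\infty}\omega\bigr)$ in $\bv$, and the combinatorial reason (uniqueness of Minkowski summands at a vertex) that $n_v = 1$ at the vertices of $P$.
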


\begin{proof}
We have already identified the wall by Proposition \ref{prop:wall}, and it remains to compute the generating functions $\bu$ and $\bv$ defined by Equation \eqref{u} and \eqref{v} respectively.  By the explicit expression of open Gromov-Witten invariants from Theorem \ref{openGW_Fr},
$$ \bu = \left(z_0, z_0\left(1 + \sum_{l = 1,\ldots,k_i} z^{u^0_l} \right), \ldots, z_0\prod_{i=0}^p \left(1 + \sum_{l = 1,\ldots,k_i} z^{u^i_l} \right)\right) $$
and
$$ \bv = \left(z_0^{-1} \prod_{i=0}^p \left(1 + \sum_{l = 1,\ldots,k_i} z^{u^i_l} \right), \ldots, z_0^{-1} \right)$$
on $\check{\pi}^{-1}(B_0 - H)$.  Thus we have the relation
$$ \bu \bv = \prod_{i=0}^p \left(1 + \sum_{l = 1,\ldots,k_i} z^{u^i_l} \right) $$
and hence $\check{X} = \mathrm{Spec}(R)$ is the subvariety defined by the above equation.
\end{proof}

\begin{remark}
The above calculation of open Gromov-Witten invariants matches with the expectation from Gross-Siebert program \cite{gross07} (which reconstructs the mirror manifold from tropical geometry instead of directly using symplectic geometry): the wall-crossing function attached to each wall $H_i$ is
$$ 1 + \sum_{l = 1,\ldots,k_i} z^{u^i_l} $$
and the coefficient of $z^{v}$ for $v \in \unu^\perp$ in $\prod_{i=0}^p \left( 1 + \sum_{l = 1,\ldots,k_i} z^{u^i_l} \right)$ equals to $\sum_{\beta'} n_{\beta_0 + \beta'}$ where the summation is over all holomorphic disc classes $\beta'$ of Maslov index zero with $\partial \beta' = v$.
\end{remark}

\begin{remark}
All the complex geometric information of $\check{\bX}$ is recorded by the hypersurface $g(z) = 0$ in $(\cpx^*)^{n-1}$.  Notice that this is a normal-crossing variety generically.  This matches with the expectation from naive T-duality that big torus is dual to small torus and vice versa.  Due to the assumption that each $R_i$'s in the Minkowski decomposition is a standard simplex, each irreducible component is of the form
$$\left\{1 + \sum_{l = 1,\ldots,k} z^{u_l} = 0\right\} \subset (\cpx^\times)^{n-1},$$
which is the product of a $(k-1)$-dimensional pair-of-pants with $(\cpx^*)^{n-k}$.
\end{remark}

\begin{remark}
We may also consider the dual picture of Minkowski decomposition, namely, decomposition of the dual fan of the polytope.  By considering the pair-of-pant factor of each component of the hypersurface $g(z) = 0$ and taking the tropical limit, we obtain a fan (which is standard simplicial of possibly lower dimension).  The union of all these fans recovers the dual fan of the original polytope.  See Figure \ref{fan-decomp-dP6} in Example \ref{fan-decomp-dP6}.
\end{remark}

\begin{remark}
In the above expression of mirror, only the integer coefficients of $g$ depend on the Minkowski decomposition that we start with.  Geometrically it means different smoothings of $\bX$ correspond to local patches of different limit points of the same stringy K\"ahler moduli.
\end{remark}

As a consequence, the open Gromov-Witten invariants of $F_r$ for $r$ above all the walls $H_i$ are non-zero only for $\beta_v$ where $v$ is a lattice point in $P$.  This is a non-trivial consequence because a priori all stable disc classes of Maslov index two
$$ \beta_0 + \sum_{i=0}^p \sum_{j=1}^{k_i} n^i_j \beta^i_j $$
may have non-trivial open Gromov-Witten invariants.

Now consider the Lagrangian fibration $\underline{\pi}$ (defined in Section \ref{Gfib}) and its disc potential.  There is only one wall and open Gromov-Witten invariants are well-defined away from this wall.  We follow the terminologies of \cite{auroux07} and make the following definition:

\begin{defn}[Product and Clifford tori] \label{product torus}
A regular fiber $T$ of the Lagrangian fibration $\underline{\pi}$ is called to be a product torus if its based point $r = \underline{\pi}(T)$ is above the wall.  It is called to be a Clifford torus if its based point is below the wall.
\end{defn}

There exists a Lagrangian isotopy between the fiber $F_{r}$ of $\pi^K$ at $r = (b,a) \in B$ for $a >> 0$ and a product torus fiber $T$ of $\underline{\pi}$, such that each member in the isotopy never bounds holomorphic discs of Maslov index zero.  Thus their open Gromov-Witten invariants are equal:
$$ n_\beta^{F_r} = n_\beta^{T} $$
where $\beta \in \pi_2(X,T)$ is identified as a disc class in $\pi_2(X,F_r)$ by this isotopy.  As a consequence,

\begin{cor}
Let $T$ be a product torus fiber of $\underline{\pi}$ and $\beta \in \pi_2(\bX_c,T)$.
Write
$$ \beta = \beta_0 + \sum_{i=0}^p \sum_{j=1}^{k_i} n^i_j \beta^i_j. $$
Then $n_\beta^{T} = 1$ when $n^i_j$ is either zero or one and for every $i$ there is at most one $j$ such that $n^i_j = 1$.  Otherwise $n_\beta^{T} = 0$.
\end{cor}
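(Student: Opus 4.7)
The plan is to reduce the claim to Theorem \ref{openGW_Fr} applied to the outermost chamber (above all walls $H_0,\ldots,H_p$), by transporting the computation along the Lagrangian isotopy that has already been constructed in the paragraph immediately preceding the corollary.

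More precisely, first I would pick a base point $r = (b,a) \in B$ with $a$ much larger than $|c_i - K|^2 - K^2$ for every $i$, so that the Gross fiber $F_r$ of $\pi^K$ lies in the uniquely determined chamber with $l = p$. By Theorem \ref{openGW_Fr}, for this fiber the only classes $\beta_0 + \sum_{i,j} n^i_j \beta^i_j$ with $\CM_1(\beta) \neq \emptyset$ are those where each $n^i_j \in \{0,1\}$ and, for each $i$, at most one $j$ satisfies $n^i_j = 1$; and in each such case $n_\beta^{F_r} = 1$. This is exactly the classification we want to establish for the product torus $T$ of $\underline{\pi}$.

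Next I would invoke the Lagrangian isotopy between $F_r$ and $T$ already introduced in the text: because no member of the isotopy bounds a non-constant holomorphic disc of Maslov index zero, Maslov index two disc bubbling is the only bubbling to worry about, and the combined absence of sphere bubbles (Lemma \ref{no sphere}) and disc bubbles of index zero makes $\CM_1(\beta)$ cobordant through this isotopy. In particular, the invariants are invariant under the isotopy, so $n_\beta^T = n_\beta^{F_r}$ for every class $\beta$, under the canonical identification $\pi_2(\bX_c,F_r) \cong \pi_2(\bX_c,T)$ induced by the isotopy. Composing these two statements gives the corollary directly.

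The main subtlety, which is really the only place anything needs checking, is that the disc-class identification used in the isotopy argument is compatible with the labelling $\beta_0 + \sum_{i,j} n^i_j \beta^i_j$ used for product tori of $\underline{\pi}$. This follows because the basic classes $\beta_0$ and $\beta^i_j$ are characterized topologically by their intersection numbers with $D_0$ and with the discriminant components $\Gamma_i$, both of which are preserved under the isotopy; the latter holds since the walls of $\pi^K$ merely collide into a single wall for $\underline{\pi}$ while the interior discriminant strata themselves are unchanged. Hence the labelling on either side matches, the numerical conclusions of Theorem \ref{openGW_Fr} transfer, and the corollary follows.
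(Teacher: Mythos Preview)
Your proposal is correct and is exactly the argument the paper uses: the corollary is stated without a separate proof, immediately after the paragraph establishing the isotopy $n_\beta^{F_r} = n_\beta^{T}$, as a direct consequence of Theorem~\ref{openGW_Fr} applied with $l=p$. Your additional remark on the compatibility of the disc-class labelling under the isotopy makes explicit what the paper leaves implicit, but otherwise the approach is identical.
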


We also consider the disc potential $W^T$ for a product torus fiber $T$.  In Section \ref{sect:ct} we will see that $W^T$ can be obtained from the toric disc potential of the conifold transition of $\bX_t$.

\begin{defn}[Disc potential] \label{defn:W^T}
Let $T$ be a product torus fiber of $\underline{\pi}$.  The disc potential is defined as
$$ W^T = \sum_{\beta \in \pi_2(\bX_c,T)} n_\beta \exp \left(-\int_\beta \omega\right) \mathrm{Hol}_\conn (\partial \beta). $$
\end{defn}

From the above corollary, we have

\begin{cor} \label{W^T}
Let $T$ be a product torus fiber of $\underline{\pi}$ and $\beta \in \pi_2(\bX_c,T)$.  Then the disc potential for $T$ is
$$ W^T = z_0 \sum_{v \in P \cap N} n_v z^v $$
where $n_v$ are the same integers as in Theorem \ref{SYZ thm}.
\end{cor}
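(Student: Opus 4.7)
The plan is to combine the preceding Corollary (which classifies which disc classes $\beta \in \pi_2(\bX_c, T)$ contribute) with the multiplicativity of the weight $\exp(-\int_\beta \omega)\,\mathrm{Hol}_\conn(\partial\beta)$ under decomposition of $\beta$. First, I would record that for $\beta = \beta_0 + \sum_{i,j} n^i_j \beta^i_j$, this weight factorizes as
\begin{equation*}
z_0 \prod_{i=0}^p \prod_{j=1}^{k_i} \left(z^{u^i_j}\right)^{n^i_j},
\end{equation*}
since $\beta_0$ carries the weight $z_0$ by definition, and each $\beta^i_j$ carries the monodromy-invariant weight $z^{u^i_j}$ (whose boundary class is $u^i_j \in \unu^\perp$, as indicated in Section \ref{Gfib}). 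Note that there is no $\beta_\infty$ contribution because we are working on the uncompactified $\bX_c$.

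Next, I would plug in the invariants from the preceding Corollary. The only classes $\beta$ giving $n_\beta \neq 0$ are those for which, for each $i \in \{0,\ldots,p\}$, the tuple $(n^i_1,\ldots,n^i_{k_i})$ is either identically zero or has exactly one entry equal to $1$ (and the rest zero), and for all such $\beta$ one has $n_\beta = 1$. Therefore
\begin{equation*}
W^T = z_0 \sum_{\beta} \prod_{i=0}^p \prod_{j=1}^{k_i} \left(z^{u^i_j}\right)^{n^i_j}
= z_0 \prod_{i=0}^p \left(1 + \sum_{j=1}^{k_i} z^{u^i_j}\right),
\end{equation*}
where the last equality simply distributes the product: choosing the summand $1$ corresponds to the empty choice at index $i$, and choosing $z^{u^i_j}$ corresponds to picking $n^i_j = 1$.

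Finally, I would invoke the identification already carried out in Theorem \ref{SYZ thm}, where the same product
\begin{equation*}
\prod_{i=0}^p \left(1 + \sum_{j=1}^{k_i} z^{u^i_j}\right) = \sum_{v \in P \cap N} n_v z^v
\end{equation*}
is expanded with the explicit positive integer coefficients $n_v$. Substituting gives the desired formula for $W^T$.

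There is no real obstacle here: the core work has already been done in the preceding Corollary (which in turn rests on Theorem \ref{openGW_Fr} plus the Lagrangian isotopy argument matching the invariants of $F_r$ and $T$). The only point worth double-checking is that the isotopy argument cited before the Corollary indeed ensures the weights $\exp(-\int_\beta \omega)\,\mathrm{Hol}_\conn(\partial \beta)$ transport correctly under the identification $\pi_2(\bX_c,F_r) \cong \pi_2(\bX_c,T)$, so that the same generating function can be read off on $T$; this is automatic since the isotopy preserves the classes $\beta_0$ and $\beta^i_j$ and no Maslov-zero discs are crossed.
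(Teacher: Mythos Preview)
Your proposal is correct and follows essentially the same approach as the paper, which in fact gives no explicit proof at all: the corollary is stated as an immediate consequence of the preceding one, and the expansion $\prod_{i=0}^p \bigl(1 + \sum_{j} z^{u^i_j}\bigr) = \sum_{v \in P \cap N} n_v z^v$ has already been carried out in Theorem~\ref{SYZ thm}. Your write-up simply makes explicit the factorization of the weight and the distributive step that the paper leaves to the reader; the one caveat is that your closing remark about transporting the weights via the isotopy is unnecessary, since $W^T$ is defined directly at $T$ (Definition~\ref{defn:W^T}) and the isotopy was only used earlier to identify the invariants $n_\beta^T$ with $n_\beta^{F_r}$.
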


\begin{remark}
If we consider the toric variety $\bX_0$, there are $m$ basic disc classes corresponding to vertices of the polytope.  This gives the Laurent polynomial
$$z_0 \sum_{v: \textrm{ vertex of }P} z^v$$
which is NOT equal to $W^T$ in general.
\end{remark}

\section{Local conifold transitions} \label{sect:ct}
Instead of taking Minkowski decompositions of the polytope $P$ to obtain smoothings of toric Calabi-Yau Gorenstein singularities, one can instead consider triangulations of $P$ giving rise to a fan $\Sigma$ which corresponds to a toric resolution $Y = Y_\Sigma$, and construct the mirror of $Y$ via SYZ.  This has been done in \cite{CLL}.  The procedure of degenerating $\bX_t$ to a toric Gorenstein canonical singularity $\bX_0$ and taking toric resolution $Y$ called conifold transition.  On both sides the geometry of Lagrangian fibrations has been explained beautifully in \cite{gross_examples}.

String theorists expect that quantum geometry undergoes a smooth deformation under conifold transition from $Y$ to $\bX_t$, even though there is singularity developed in the procedure of varying from $Y$ to $\bX_t$.  (See Figure \ref{conifold-transition} showing a picture of the K\"ahler moduli.)  Since we have computed all the open Gromov-Witten invariants of the smoothing $\bX_t$, and from \cite{CCLT13} we also have good understanding on open Gromov-Witten invariants of the toric resolution $Y$, we can compare their disc potentials (which capture the quantum geometry relative to a torus fiber) and also their SYZ mirrors.  The phenomenon is the same as that described by Ruan's crepant resolution conjecture \cite{Ruan06} and also its counterpart in the open sector \cite{CCLT12}.  The main difference is that now we consider crepant resolution of an isolated Gorenstein toric singularity which is \emph{not of orbifold type}, and its smoothing is \emph{no longer a toric manifold}.

\begin{figure}[htp]
\begin{center}
\includegraphics[scale=0.5]{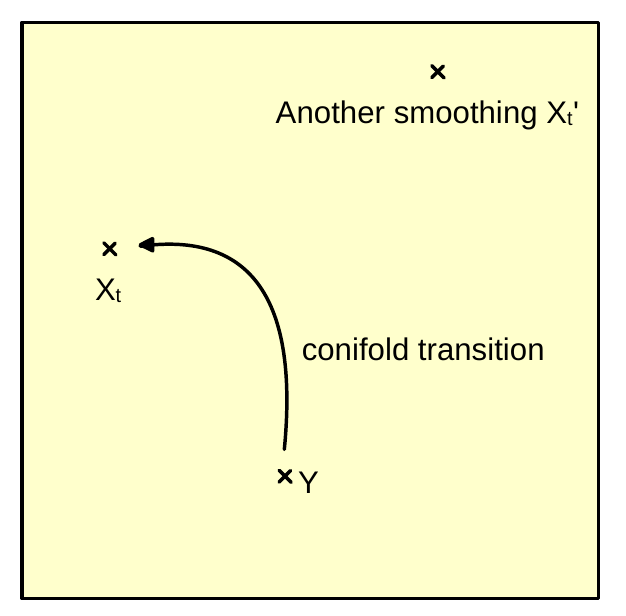}
\end{center}
\caption{The K\"ahler moduli.  For a toric Gorenstein singularity, its resolution lies in a neighborhood of a large volume limit.  Its smoothing gives a conifold limit point in the moduli, and different smoothings gives different limit points.}
\label{conifold-transition}
\end{figure}

The following theorem gives a positive response to the string theorists' expectation.


\begin{thm}[Open conifold transition Theorem] \label{thm:ct}
Let $Y$ be a toric resolution of an isolated Gorenstein canonical singularity given by a lattice polytope $P$, and suppose there exists a Minkowski decomposition of $P$ into standard simplices (of possibly smaller dimension than $P$), such that the corresponding smoothing $\bX_t$ is smooth.  Let $W^Y: (\cpx^\times)^n \to \cpx$ be the disc potential of a moment-map fiber of $Y$, and $W^{\bX_t}: (\cpx^\times)^n \to \cpx$ be the disc potential of a regular fiber of $\underline{\pi}^{\bX_t}$.  Let $q_i$'s be the K\"ahler parameters of $Y$ for $i = 1, \ldots, \tilde{m}-n$, where $\tilde{m}$ is the number of lattice points contained in $P$.  Then there exists an invertible change of coordinates $q(\check{q})$ and specialization of parameters $\check{q} = \underline{\check{q}} \in \cpx^{\tilde{m}-n}$ such that

\begin{enumerate}
\item $\check{W}^Y(\check{q}) := W^Y(q(\check{q}))$ can be analytic continued to all $\check{q} \in \cpx^{\tilde{m}-n}$;
\item
$\check{W}^Y(\check{q} = \underline{\check{q}})$ equals to $W^{\bX_t}$ up to affine change of coordinates on the domain $(\cpx^\times)^n$.
\end{enumerate}

The SYZ mirrors $\check{\bX}$ and $\check{Y}_q$ also have such a relation, that is, the same change of coordinates $q(\check{q})$ and same specialization of variables $\check{q} = \underline{\check{q}}$ gives
$$\check{\bX} = \check{Y}_{q(\check{q})}|_{\check{q} = \underline{\check{q}}}.$$
\end{thm}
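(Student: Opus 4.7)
The plan is to combine the explicit formula for $W^{\bX_t}$ given by Corollary \ref{W^T} with the open mirror theorem for toric Calabi-Yau manifolds proved in \cite{CCLT13}. That theorem says that the disc potential $W^Y$ of a moment-map Lagrangian fiber of $Y$ is, after composition with an inverse mirror map $q = q(\check{q})$, equal to the Hori-Vafa superpotential, which is the Laurent polynomial
$$ W^{\mathrm{HV}}(\check{q}) = z_0 \sum_{v \in P \cap N} \check{q}_v \, z^v, $$
where each $\check{q}_v$ is an explicit Laurent monomial in the open parameters $\check{q}_1, \ldots, \check{q}_{\tilde{m}-n}$, with $\check{q}_v = 1$ at the chosen apex. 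First I would take $q(\check{q})$ to be this inverse mirror map and set $\check{W}^Y(\check{q}) := W^Y(q(\check{q}))$. Because the right-hand side is a Laurent polynomial in the $\check{q}_j$, it extends analytically to all $\check{q} \in \cpx^{\tilde{m}-n}$ (off the divisor of poles of the $\check{q}_v$), establishing item (1).

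Next I would determine the specialization $\underline{\check{q}}$. Comparing term by term with
$$W^{\bX_t} = z_0 \sum_{v \in P \cap N} n_v z^v$$
from Corollary \ref{W^T}, where the integers $n_v$ come from expanding $\prod_{i=0}^p \bigl(1 + \sum_l z^{u^i_l}\bigr) = \sum_v n_v z^v$, the candidate $\underline{\check{q}}$ is characterized by $\check{q}_v(\underline{\check{q}}) = n_v$ for every $v \in P \cap N$. Exhibiting such a point requires a combinatorial compatibility: the multiplicative relations among the monomials $\check{q}_v$, which encode the linear relations among rays of the fan of $Y$ (equivalently, a basis of the Mori cone of $Y$), must match the multiplicative relations among the $n_v$. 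The latter relations are precisely those coming from the product factorization above, and are controlled by how the triangulation of $P$ underlying $Y$ refines the Minkowski decomposition $P = R_0 + \ldots + R_p$. Once the identification is set up, the resulting potential $z_0 \sum_v n_v z^v$ coincides with $W^{\bX_t}$ up to an affine change of coordinates on $(\cpx^\times)^n$, giving item (2).

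For the SYZ mirror statement, the constructions recalled in Section \ref{SYZ} present both $\check{\bX}$ and $\check{Y}_q$ as hypersurfaces of the form $\bu\bv = g(z)$ in $\cpx^2 \times (\cpx^\times)^{n-1}$, where $g(z)$ is $W/z_0$ for the respective disc potentials. Hence the same change of coordinates $q(\check{q})$ and the same specialization $\check{q} = \underline{\check{q}}$ immediately transport the equation of $\check{Y}_{q(\check{q})}$ to that of $\check{\bX}$, yielding $\check{\bX} = \check{Y}_{q(\check{q})}|_{\check{q} = \underline{\check{q}}}$.

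I expect the main obstacle to be the combinatorial matching in the second step: the monomials $\check{q}_v$ are not independent, so producing a single point $\underline{\check{q}}$ at which $\check{q}_v = n_v$ simultaneously for every $v$ requires a careful identification of the Mori-cone relations on the $\check{q}_v$ with the product structure of $\prod_i(1 + \sum_l z^{u^i_l})$. Once this is in place, analytic continuation is essentially free (the Hori-Vafa side is polynomial in $\check{q}$), and the comparison with the Corollary \ref{W^T} formula for $W^{\bX_t}$ is then immediate.
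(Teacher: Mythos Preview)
Your approach is essentially the same as the paper's: invoke the open mirror theorem of \cite{CCLT13} to write $W^Y(q(\check{q}))$ in Hori--Vafa form, compare with $W^{\bX_t}=z_0\sum_v n_v z^v$ from Corollary~\ref{W^T}, and use an affine change of coordinates on $(\cpx^\times)^n$ together with a specialization of $\check{q}$ to match coefficients; the SYZ mirror statement then follows since both mirrors are hypersurfaces $\bu\bv=g(z)$ with $g=W/z_0$.

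The one place where you diverge from the paper is your anticipated ``combinatorial matching'' obstacle, and this worry is unnecessary. In the paper's normalization of the Hori--Vafa potential one fixes a standard simplex with vertices $v_1,\ldots,v_n\in P\cap N$ (with $v_1$ a vertex of $P$), and then
\[
W^Y(q(\check{q}))=z_0\Bigl(z^{v_1}+\cdots+z^{v_n}+\sum_{v\in (P\cap N)\setminus\{v_i\}}\check{q}_v\,z^v\Bigr),
\]
where the $\check{q}_v$ for $v\notin\{v_1,\ldots,v_n\}$ are \emph{independent} coordinates on $\cpx^{\tilde m-n}$, not constrained monomials. Thus there are no Mori-cone compatibility relations to satisfy: the affine change $z_0\mapsto\lambda_0 z_0$, $z^{v_{j+1}-v_j}\mapsto\lambda_j z^{v_{j+1}-v_j}$ is used to make the first $n$ coefficients equal to $n_{v_1},\ldots,n_{v_n}$ (this uses $n_{v_1}=1$ since $v_1$ is a vertex of $P$), after which each remaining coefficient becomes $c_v\check{q}_v$ for some nonzero constant $c_v$, and one simply sets $\check{q}_v=n_v/c_v$. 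So the step you flagged as the main obstacle is in fact trivial once the Hori--Vafa side is written in this form.
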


\begin{proof}
The disc potential $W^Y$ is given by
$$ W^Y(q) = z_0 \sum_{v \in P \cap N} (1 + \delta_v) z^v $$
where
$$ \delta_v = \sum_{\alpha \not= 0} n_{\beta_v} q^\alpha. $$
By the open mirror theorem \cite[Theorem 1.5]{CCLT13} (Conjecture 5.1 of \cite{CLL}), the disc potential $W^Y$ equals to the Hori-Vafa mirror under the mirror map $q(\check{q})$, which is of the form
$$ W^Y(q(\check{q})) = z_0 \left(z^{v_1} + \ldots + z^{v_{n}} + \sum_{v \in (P \cap N) - \{v_i\}_{i=1}^{n}} \check{q}_v z^v \right) $$
where $\{v_i\}_{i=1}^{n}$ are vertices of a chosen standard simplex in $\unu^\perp$.  We may choose $v_1$ to be a vertex of $P$ and $v_2,\ldots,v_{n}$ be lattice points contained in $P$.  Note that $v_2 - v_1, \ldots, v_{n}-v_1$ form a basis of $\unu^\perp$.  In particular we can write 
$$ v_1 = a_1 (v_2 - v_1) + \ldots + a_{n-1} (v_n - v_{n-1}) $$
for some integers $a_1, \ldots, a_{n-1}$.  Thus $z^{v_1} = (z^{v_2 - v_1})^{a_1} \ldots (z^{v_n - v_{n-1}})^{a_{n-1}}$.

The disc potential $W^{\bX_t}$ has the expression
$$ W^{\bX_t} = z_0 \sum_{v \in P \cap N} n_v z^v $$
by Corollary \ref{W^T}, where $n_v = 1$ for vertices $v$ of $P$.  (In particular $n_{v_1} = 1$.)  

Under the change of coordinates
$$ z_0 \mapsto \lambda_0 z_0, \,\, z^{v_2-v_1} \mapsto \lambda_1 z^{v_2-v_1}, \,\, \ldots, z^{v_n-v_{n-1}} \mapsto \lambda_{n-1} z^{v_n-v_{n-1}} $$
we have
\begin{align*}
z_0z^{v_1} &\mapsto \left(\lambda_0\lambda_1^{a_1}\ldots\lambda_{n-1}^{a_{n-1}}\right) z_0z^{v_1},\\
z_0z^{v_2} &\mapsto  \left(\lambda_0\lambda_1^{a_1}\ldots\lambda_{n-1}^{a_{n-1}}\right)\lambda_1 z_0z^{v_2},\\ 
&\vdots \\
z_0z^{v_{n-1}} &\mapsto \left(\lambda_0\lambda_1^{a_1}\ldots\lambda_{n-1}^{a_{n-1}}\right)\lambda_{n-1} z_0z^{v_{n-1}}.
\end{align*}
Thus by setting
$$\lambda_1 = n_{v_2}, \lambda_{n-1} = n_{v_n}, \lambda_0 = \frac{1}{n_{v_2}^{a_1}\ldots n_{v_n}^{a_{n-1}}},$$
under the change of coordinates $W^Y(q(\check{q}))$ becomes
$$z_0 \left(n_{v_1} z^{v_1} + n_{v_2} z^{v_2} + \ldots + n_{v_{n}} z^{v_{n}} + \sum_{v \in (P \cap N) - \{v_i\}_{i=1}^{n}} c_v \check{q}_v z^v \right)$$
for some constants $c_v$'s.  Then the specialization of parameters
$$\check{q}_v = \frac{n_v}{c_v}$$
for $v \in  (P \cap N) - \{v_i\}_{i=1}^{n}$ equates the above expression with $W^{\bX_t}$.

The SYZ mirrors have similar expressions as $W^Y$ and $W^{\bX_t}$: by the open mirror theorem \cite{CCLT13}, the SYZ mirror of $Y$ is defined by
$$ uv = \left(z^{v_1} + \ldots + z^{v_{n}} + \sum_{v \in (P \cap N) - \{v_i\}_{i=1}^{n}} \check{q}_v z^v \right)$$
while the SYZ mirror of $\bX_t$ is defined by
$$ uv = \prod_{i=0}^p \left(1 + \sum_{l = 1,\ldots,k_i} z^{u^i_l} \right).$$
Thus the mirror map $q(\check{q})$ and the same specialization of parameters give
$$\check{\bX} = \check{Y}_{q(\check{q})}|_{\check{q} = \underline{\check{q}}}.$$
\end{proof}

\begin{remark}
In general the coefficients of the SYZ mirror $\check{Y}_q$ (or the disc potential $W^Y_q$) is a series in $q$, and it may not be legal to specify a value of the K\"ahler parameter $q$ (because the series may not converge at that value).  Thus a change of variable $q(\check{q})$ is necessary in order to have an analytic continuation of $\check{Y}_q$ from the large complex structure limit to the conifold limit.  
\end{remark}

\section{Examples}

\subsection{A-type surface singularities}
Consider the $A_p$ singularity $\cpx^2 / \Z_{p+1}$, which is described by the fan whose rays are generated by $(k,1)$ for $k = 0, \ldots, p+1$.  Then the polytope is the line segment $P = [0,p+1]$ and the fan is obtained by cone over $P$.  By taking the Minkowski decomposition
$$ P = R_0 + \ldots + R_p $$
where $R_i = [0,1]$ for all $i = 0, \ldots, p$, one obtains a smoothing $\bX_t$ of the $A_p$ singularity.  See Figure \ref{fig:An}.

\begin{figure}[htp]
\caption{Smoothing of $A_p$ singularity $\cpx^2 / \Z_{p+1}$ and the Gross fibration.  One can imagine that as the smoothing parameter $c$ tends to $0$, the walls and boundary of the base of the Gross fibration collide together and one gets back the moment map polytope of $\cpx^2 / \Z_{p+1}$.}
  \centering
  \label{fig:An}
   \begin{subfigure}[b]{0.3\textwidth}
   	\centering
    \includegraphics[width=\textwidth]{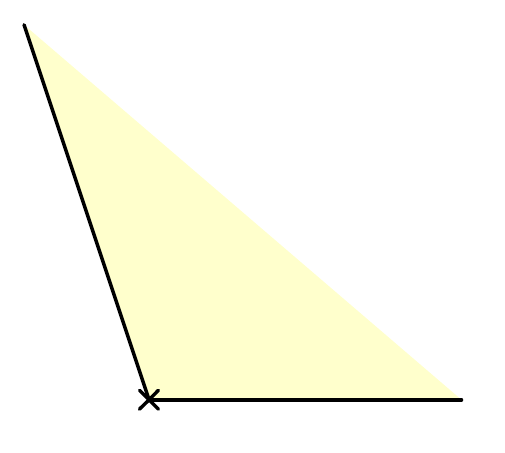}
    \caption{The moment map polytope of $\cpx^2 / \Z_{p+1}$.}
    \label{An-polytope}
   \end{subfigure}
   \hspace{10pt}
   \begin{subfigure}[b]{0.3\textwidth}
   	\centering
    \includegraphics[width=\textwidth]{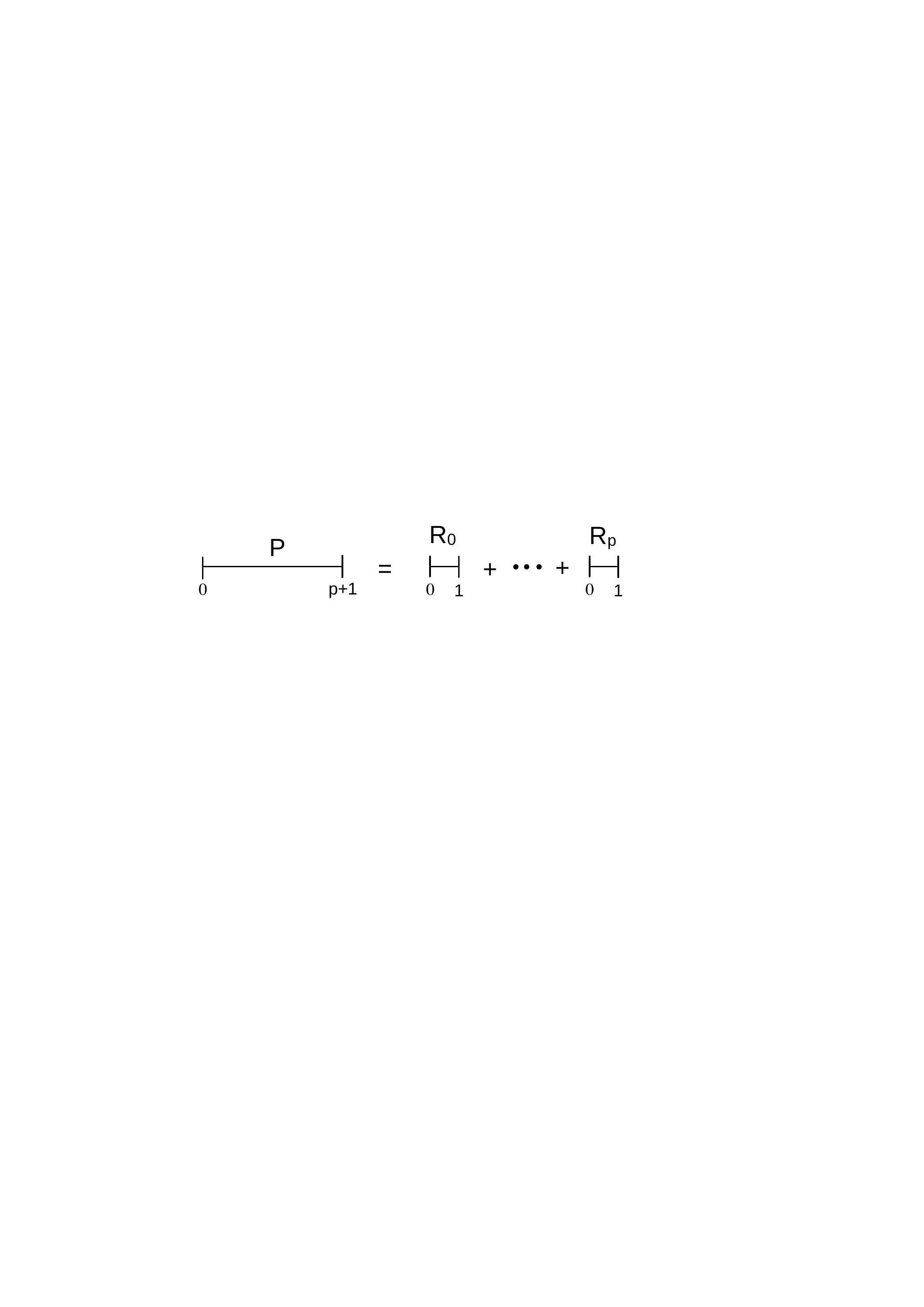}
    \caption{Minkowski decomposition of an interval $[0,p+1]$ into $p+1$ copies of $[0,1]$.}
    \label{Mikowski_decomp_interval}
   \end{subfigure}
   \hspace{10pt}
   \begin{subfigure}[b]{0.3\textwidth}
   	\centering
    \includegraphics[width=\textwidth]{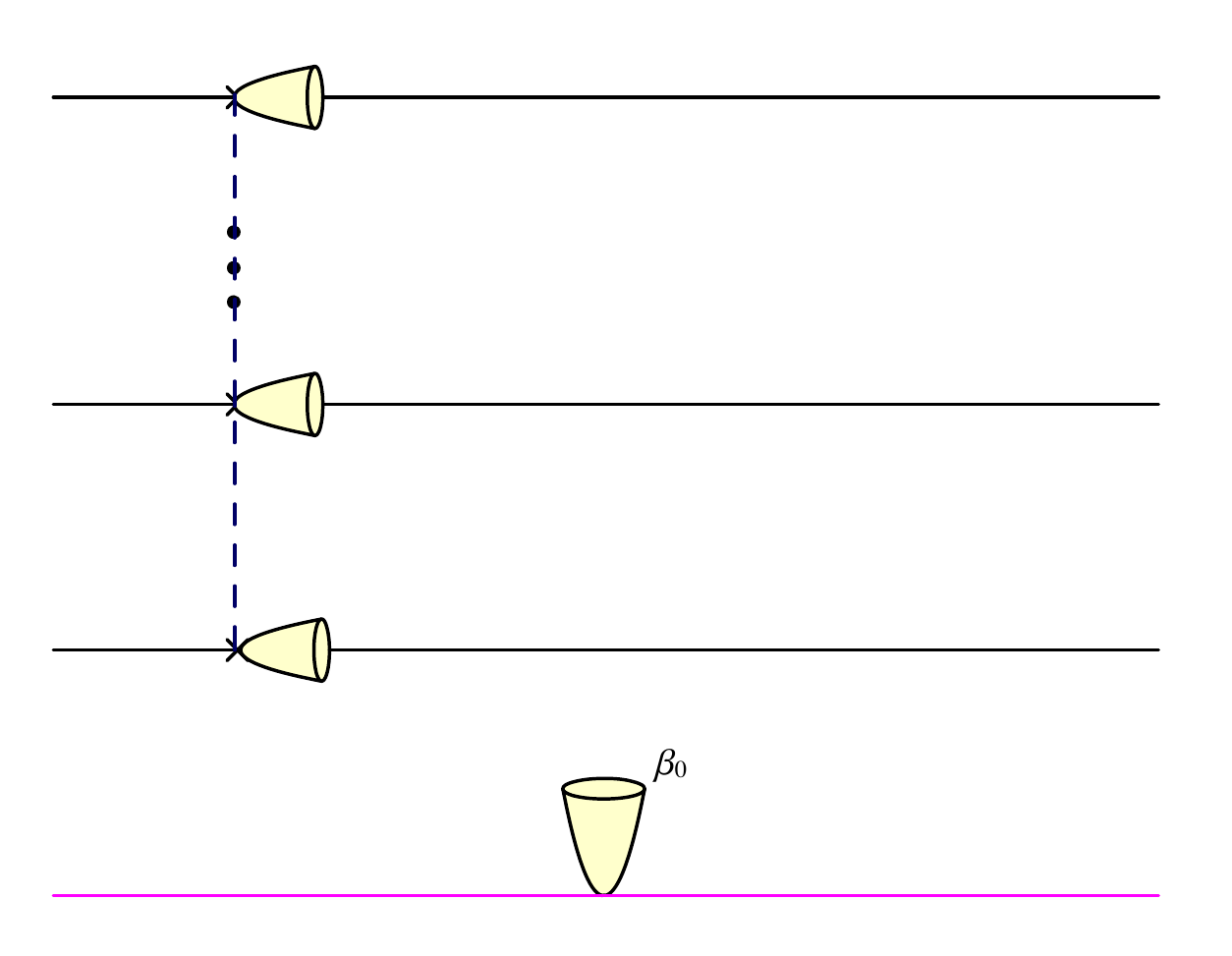}
    \caption{The Gross fibration over $\bX_c$ and the basic holomorphic discs.  There are Lagrangian two-spheres whose images in the base are the dotted lines between the critical points (shown as crosses).}
    \label{GfibAn}
   \end{subfigure}
\end{figure}

The Gross fibration $\pi^K$ has $p+1$ parallel walls $H_i$'s for $i=0,\ldots,p$ in the base, and each of them contains a singular value of the fibration (see Figure \ref{GfibAn}).  There is an $A_p$ chain of Lagrangian two-spheres hitting the singular fibers, and they do not contribute to computation of open Gromov-Witten invariants (in contrast to the other side of resolution).  We can also deform the fibration and make all the walls collapse to one (described in Section \ref{Gfib}), and this is denoted as $\underline{\pi}$.  Then there is one interior singularity left, and the singular fiber is depicted in Figure \ref{AnLagsphere}.

\begin{figure}[htp]
\begin{center}
\includegraphics[scale=0.5]{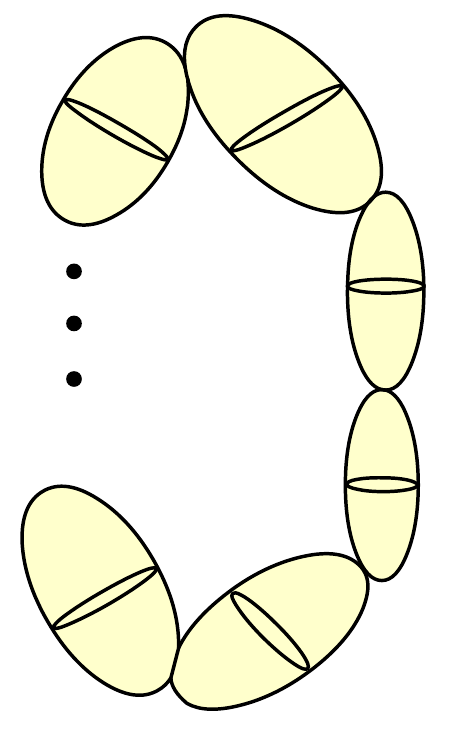}
\end{center}
\caption{The singular fiber of the Lagrangian fibration $\underline{\pi}$ in which all walls collapse to a hyperplane in the base.}
\label{AnLagsphere}
\end{figure}

From Theorem \ref{SYZ thm}, the SYZ mirror is
$$ uv = (1+z)^{p+1} = \sum_{k=0}^{p+1} \mathrm{C}^{\,p+1}_{\,k} z^k $$
for $(u,v,z) \in \C^2 \times \C^\times$, which is again the $A_p$ singularity.  Thus we see that $A_p$ singularity is self-mirror in this sense.  Moreover we get a correspondence between the Minkowski decomposition of an interval and the polynomial factorization $(1+z)^{p+1} = \sum_{k=0}^{p+1} \mathrm{C}^{\,p+1}_{\,k} z^k$.

On the other side of the transition, we consider toric resolution of $A_n$ singularity.  SYZ and relevant open Gromov-Witten invariants have been computed in \cite{LLW_surfaces}.  The moment map polytope is shown in Figure \ref{An resol}.  The SYZ mirror is
$$ uv = (1+z)(1+q_1z)(1+q_1q_2z)\ldots(1+q_1\ldots q_{p}z). $$

\begin{figure}[htp]
\caption{The moment map and Gross fibration of resolution of $A_p$ singularity.  Each line segment on the boundary corresponds to a holomorphic sphere, whose K\"ahler parameter is denoted as $q_i$.}
  \centering
  \label{An resol}
   \begin{subfigure}[b]{0.3\textwidth}
   	\centering
    \includegraphics[width=\textwidth]{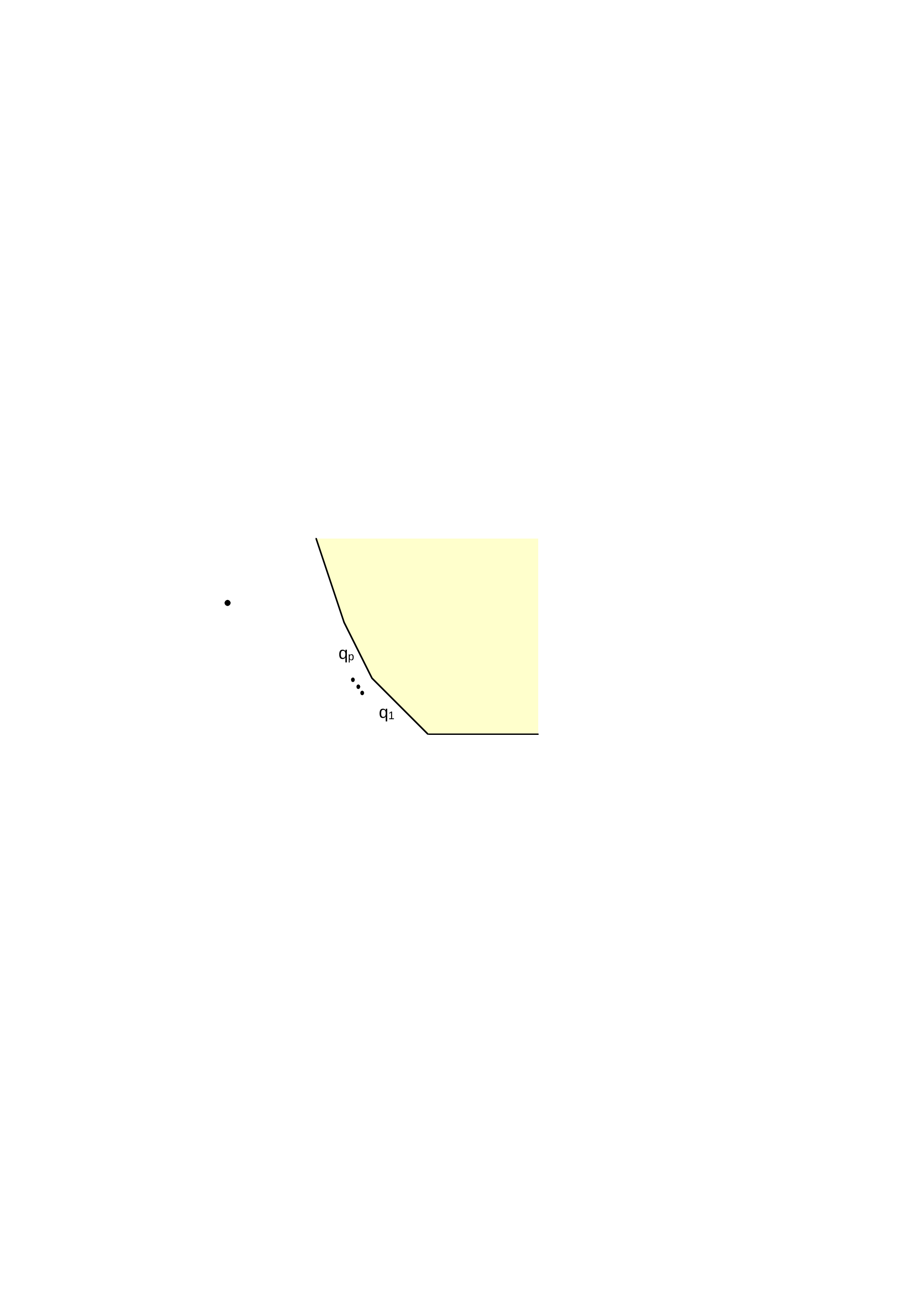}
    \caption{The moment map polytope of resolution of $A_n$ singularity.}
    \label{An-resol-polytope}
   \end{subfigure}
   \hspace{10pt}
   \begin{subfigure}[b]{0.3\textwidth}
   	\centering
    \includegraphics[width=\textwidth]{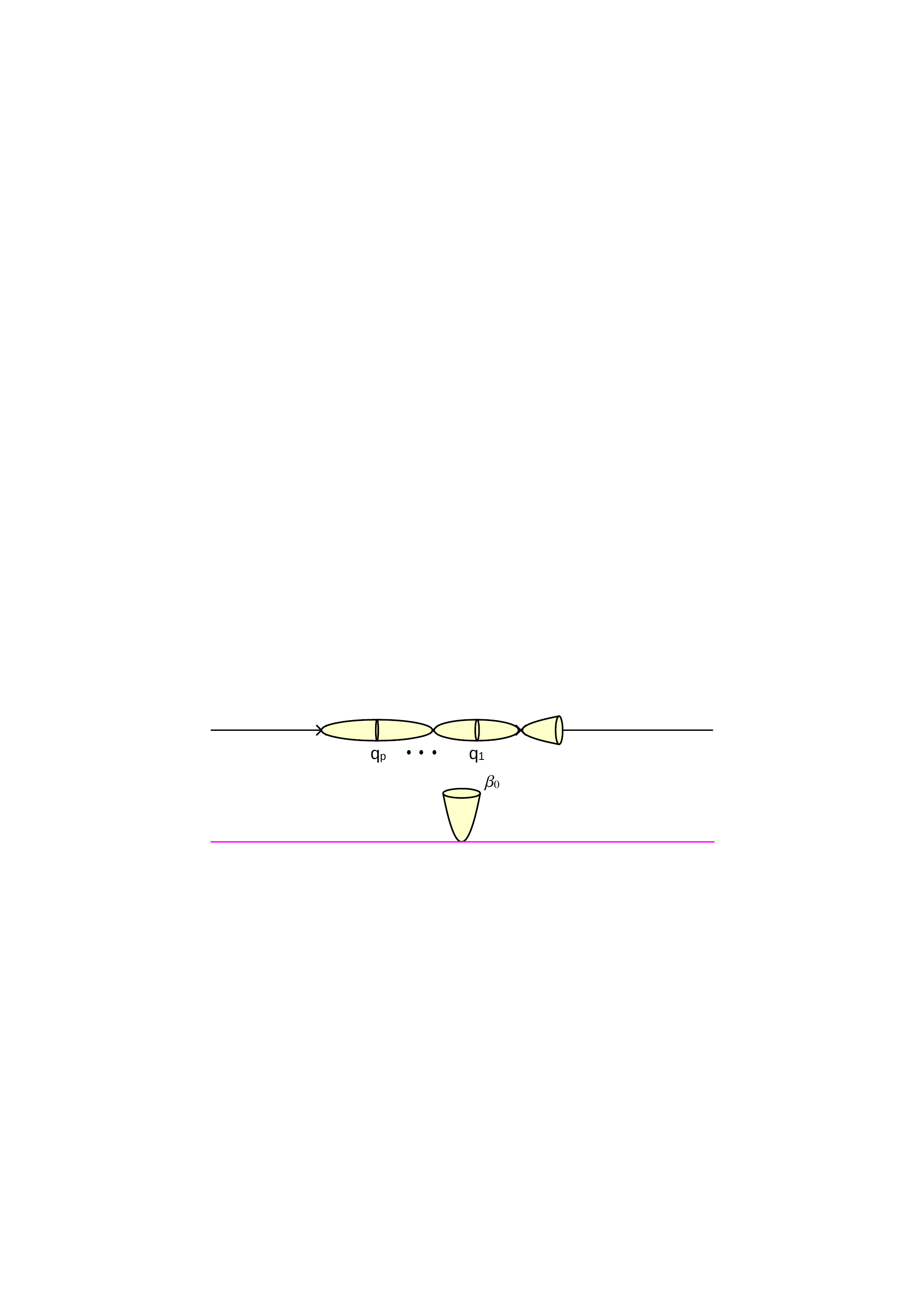}
    \caption{Gross fibration on the resolution.  On this side there is only one wall, and there are holomorphic spheres giving rise to bubbling of open Gromov-Witten invariants.}
    \label{GfibAn-resol}
   \end{subfigure}
\end{figure}

It is easy to see that the specialization of K\"ahler parameters in Theorem \ref{thm:ct} in this case is
$$q_1 = \ldots = q_p = 1$$
and the SYZ mirror of $Y$ reduces to the SYZ mirror of $\bX_t$.
(In this case we do not even need analytic continuation.)  This gives the conifold limit point which is also an orbifold limit in this case.  We can study SYZ via orbidisc invariants (defined in \cite{CP}) of the orbifold $\bX$ instead of its smoothing.  This was done in \cite{CCLT12} and one obtains a different flat coordinates around the conifold limit.

\subsection{A three-dimensional conifold}

Consider the conifold singularity $X$ defined by $xy = zw$ for $x,y,z,w \in \cpx$, which is described by the three-dimensional fan whose rays are generated by
$$(0,0,1), (1,0,1), (0,1,1), (1,1,1).$$
Then the corresponding polytope is the square $P = [0,1] \times [0,1]$ and the fan is obtained by cone over $P$.  By taking the Minkowski decomposition
$$ P = R_0 + R_1 $$
where $R_0 = [1,0]$ and $R_1 = [0,1]$, one obtains a smoothing $\bX_t$ of the conifold singularity.  See Figure \ref{fig:conifold}.  $bX_t$ can be identified with $T^* \bS^3$.

\begin{figure}[htp]
\caption{The three-dimensional conifold, its smoothing and the Gross fibration.  A Lagrangian three-sphere comes up in the smoothing, and its image under the Gross fibration is the dotted line as shown in the diagram.}
  \centering
  \label{fig:conifold}
   \begin{subfigure}[b]{0.3\textwidth}
   	\centering
    \includegraphics[width=\textwidth]{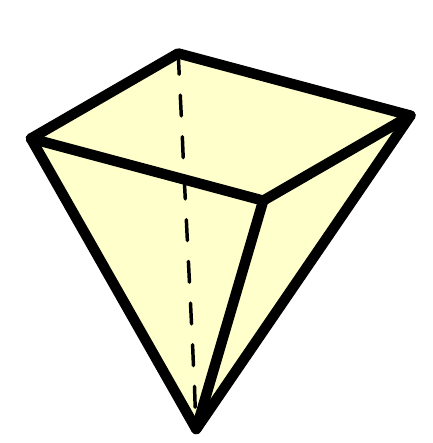}
    \caption{The moment map polytope.}
    \label{conifold-polytope}
   \end{subfigure}
   \hspace{10pt}
   \begin{subfigure}[b]{0.3\textwidth}
   	\centering
    \includegraphics[width=\textwidth]{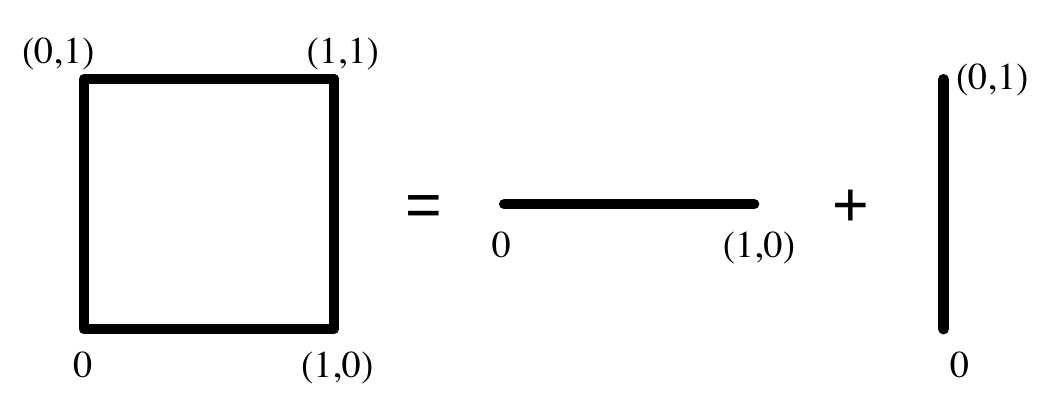}
    \caption{Minkowski decomposition.}
    \label{conifold-decomp}
   \end{subfigure}
   \hspace{10pt}
   \begin{subfigure}[b]{0.3\textwidth}
   	\centering
    \includegraphics[width=\textwidth]{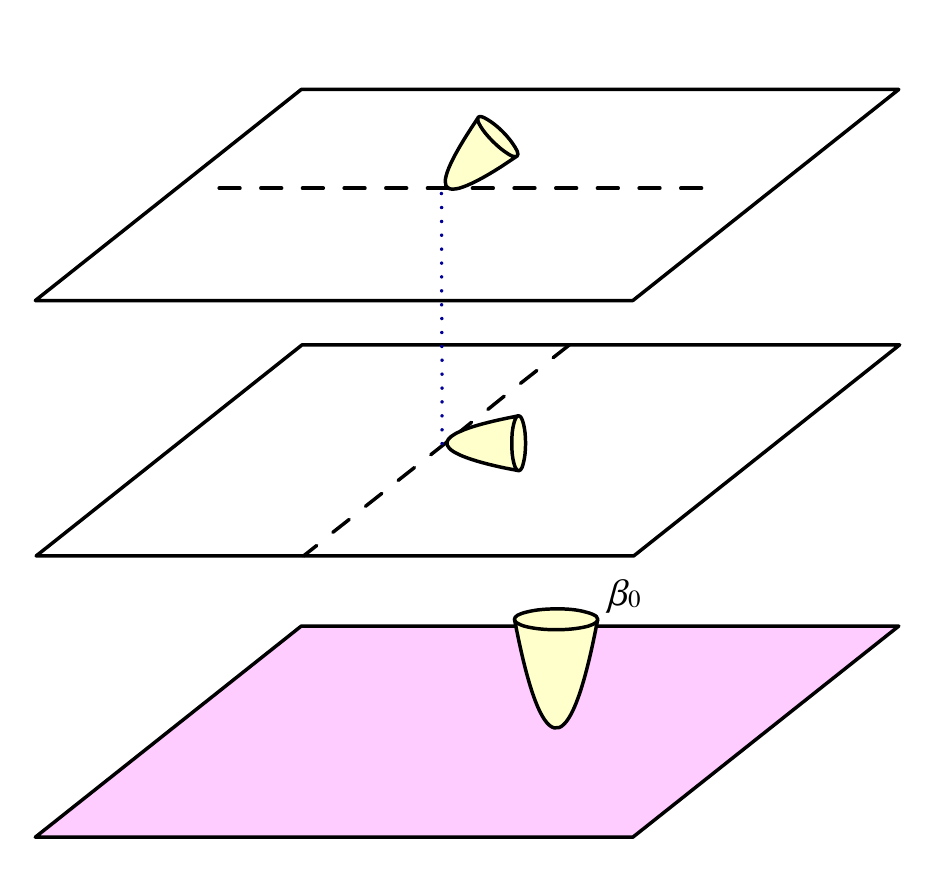}
    \caption{Gross fibration and the basic holomorphic discs.}
    \label{Gfibconifold}
   \end{subfigure}
\end{figure}

The Gross fibration $\pi^K$ has two parallel walls $H_0$ and $H_1$ in the base, and the discriminant loci are the boundary of the base, and two lines contained in these two planes respectively (see Figure \ref{GfibAn}).  There is a Lagrangian three sphere whose image under $\pi^K$ is a vertical line segment joining the two planes.  Under the identification $\bX_t \cong T^*\bS^3$, the Lagrangian sphere is the zero section of $T^*\bS^3 \to \bS^3$.  There are no holomorphic two-sphere.

The Lagrangian fibration $\underline{\pi}$ is formed by collapsing the two walls into one.  See Figure \ref{deg_3torus} for the topological type of the singular Lagrangian fibers over the discriminant locus (which is a cross consisting of two lines) in the wall.

\begin{figure}[htp]
\caption{Singular fibers of $\underline{\pi}$ which are degenerate tori.}
  \centering
  \label{deg_3torus}
   \begin{subfigure}[b]{0.3\textwidth}
   	\centering
    \includegraphics[scale=0.5]{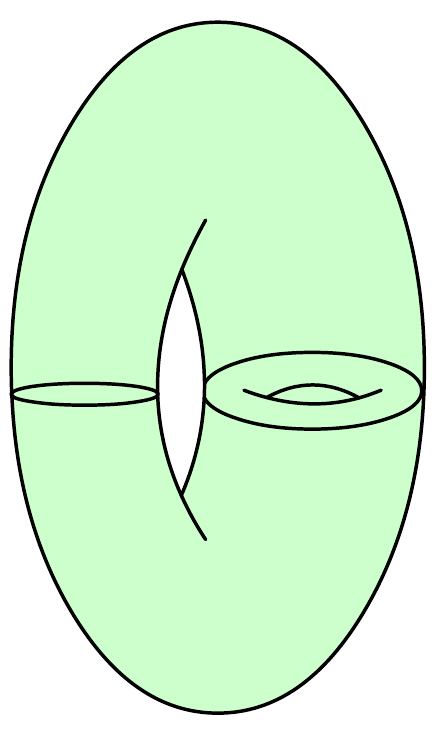}
    \caption{A sigular fiber over a generic point of the cross, the discriminant locus.  It is a two-torus fibration over the circle with one singular fiber.}
    \label{deg_3torus1}
   \end{subfigure}
   \hspace{10pt}
   \begin{subfigure}[b]{0.3\textwidth}
   	\centering
    \includegraphics[scale=0.4]{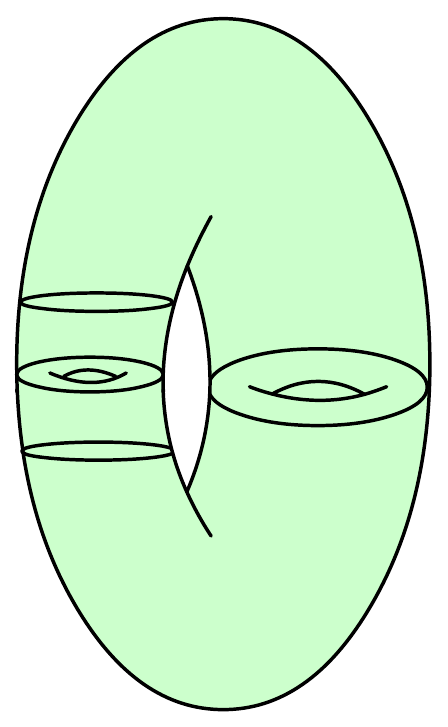}
    \caption{A sigular fiber over the central point of the cross.  It is a two-torus fibration over the circle with two singular fibers.}
    \label{deg_3torus2}
   \end{subfigure}
\end{figure}

From Theorem \ref{SYZ thm}, the SYZ mirror is
$$ uv = (1+z_1)(1+z_2) = 1 + z_1 + z_2 + z_1z_2.$$
The Minkowski decomposition shown in Figure \ref{conifold-decomp} corresponds to the polynomial factorization $(1+z_1)(1+z_2) = 1 + z_1 + z_2 + z_1z_2$.

On the other side of the transition we may consider $\CO_{\bP^1}(-1) \oplus \CO_{\bP^1}(-1)$ which is a toric resolution of $X$ (see Figure \ref{O(-1)+O(-1)}).  SYZ and relevant open Gromov-Witten invariants have been computed in Section 5.3.2 of \cite{CLL}.  The SYZ mirror is
$$ uv = 1+z_1+z_2+ q z_1z_2 $$
where $q$ is the K\"ahler parameter corresponding to the size of the zero section $\bP^1 \subset \CO_{\bP^1}(-1) \oplus \CO_{\bP^1}(-1)$.

\begin{figure}[htp]
\begin{center}
\includegraphics[scale=0.5]{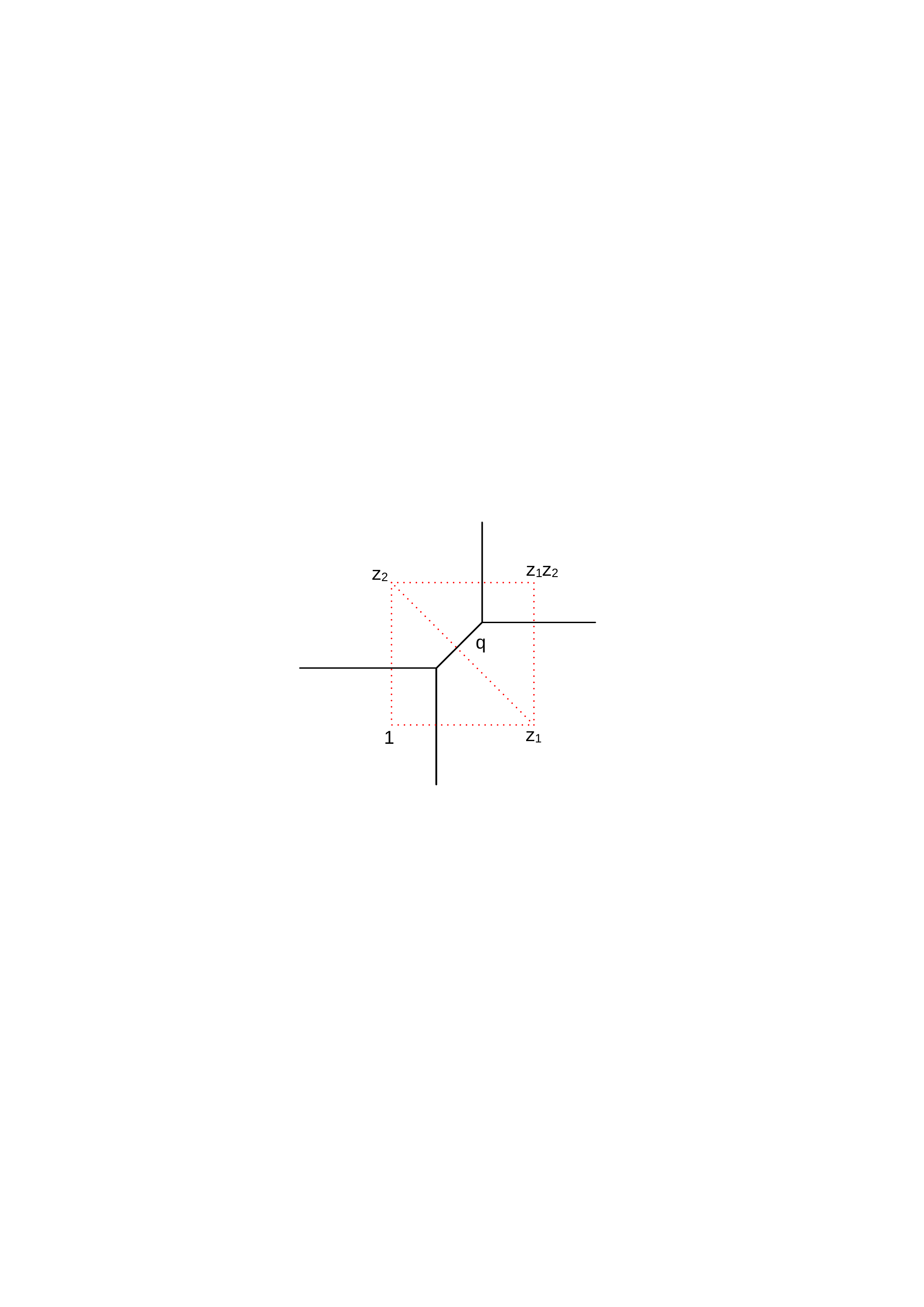}
\end{center}
\caption{$\CO_{\bP^1}(-1) \oplus \CO_{\bP^1}(-1)$.  The dotted line shows the fan picture (projected to the plane) and the solid line shows the polytope picture.  The line segment corresponds to the zero section $\bP^1$ marked by the K\"ahler parameter $q$.}
\label{O(-1)+O(-1)}
\end{figure}

Thus the specialization of K\"ahler parameters in Theorem \ref{thm:ct} in this case is
$$q = 1.$$

\subsection{Cone over del Pezzo surface of degree six}
Let $N = \Z^3$, $\unu = (0,0,1) \in M$.  The polytope $P \in \unu^\perp_\R$ has corners $v_1 = (0,0), v_2 = (1,0), v_3 = (2,1), v_4 = (2,2), v_5 = (1,2)$ and $v_6 = (0,1)$, and $\sigma \subset \R^3$ is the cone over $P$.  This is Example 3.1 of \cite{gross_examples}.  There are two different Minkowski decompositions as shown in Figure \ref{decomp_dP6} giving rise to two different ways of smoothings of $X = X_\sigma$.

The Gross fibrations and holomorphic discs are shown in Figure \ref{fig:GfibdP6}.  In the first smoothing, there are Lagrangian three-spheres whose images are vertical line segments between two consecutive walls (shown as a dotted line in Figure \ref{fig:GfibdP6-1}).  In the second smoothing, there is a Lagrangian cone over the two torus whose image is a vertical line segment between the two walls (shown as a dotted line in Figure \ref{fig:GfibdP6-2}).

\begin{figure}[htp]
\begin{center}
\includegraphics[scale=0.8]{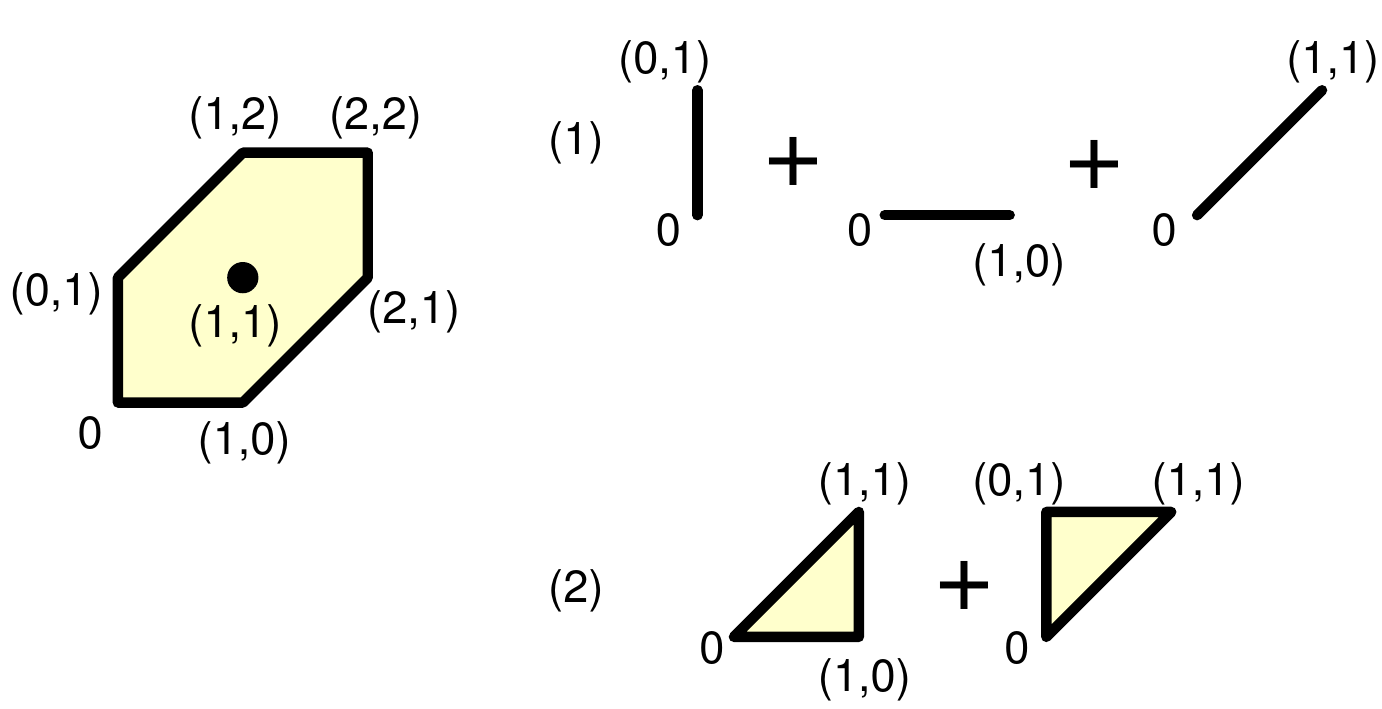}
\end{center}
\caption{Two possible Minkowski decompositions of the fan polytope of $dP_6$.}
\label{decomp_dP6}
\end{figure}

\begin{figure}[htp]
\caption{Cone over $dP_6$ and the Gross fibrations over its two different smoothings.  The dotted lines are the discriminant loci.   The holomorphic disc emanated from the boundary divisor at the bottom is in the class $\beta_0$ which has Maslov index two, and the discs emanated from singular fibers have Maslov index zero.}
  \centering
  \label{fig:GfibdP6}
   \begin{subfigure}[b]{0.3\textwidth}
   	\centering
    \includegraphics[width=\textwidth]{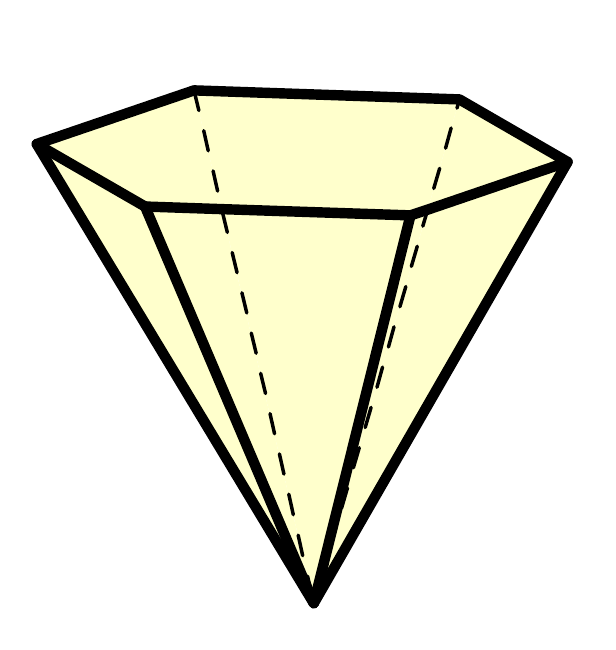}
    \caption{The moment map polytope.  In this example it has the same shape as the fan polytope.}
    \label{fan-polytope-cone_dP6}
   \end{subfigure}
   \hspace{10pt}
   \begin{subfigure}[b]{0.3\textwidth}
   	\centering
    \includegraphics[width=\textwidth]{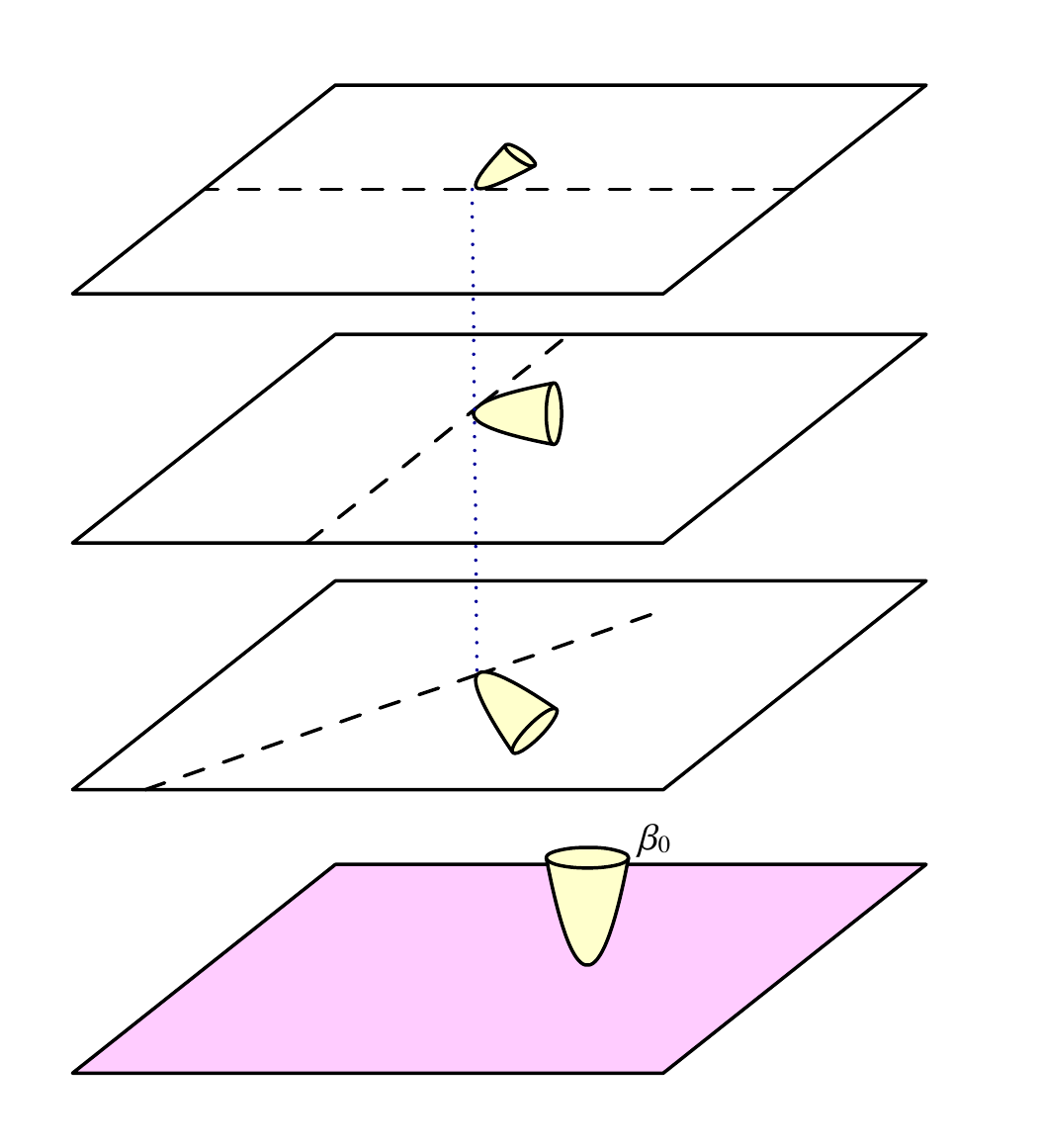}
    \caption{Gross fibration of the first smoothing.}
    \label{fig:GfibdP6-1}
   \end{subfigure}
   \hspace{10pt}
   \begin{subfigure}[b]{0.3\textwidth}
   	\centering
    \includegraphics[width=\textwidth]{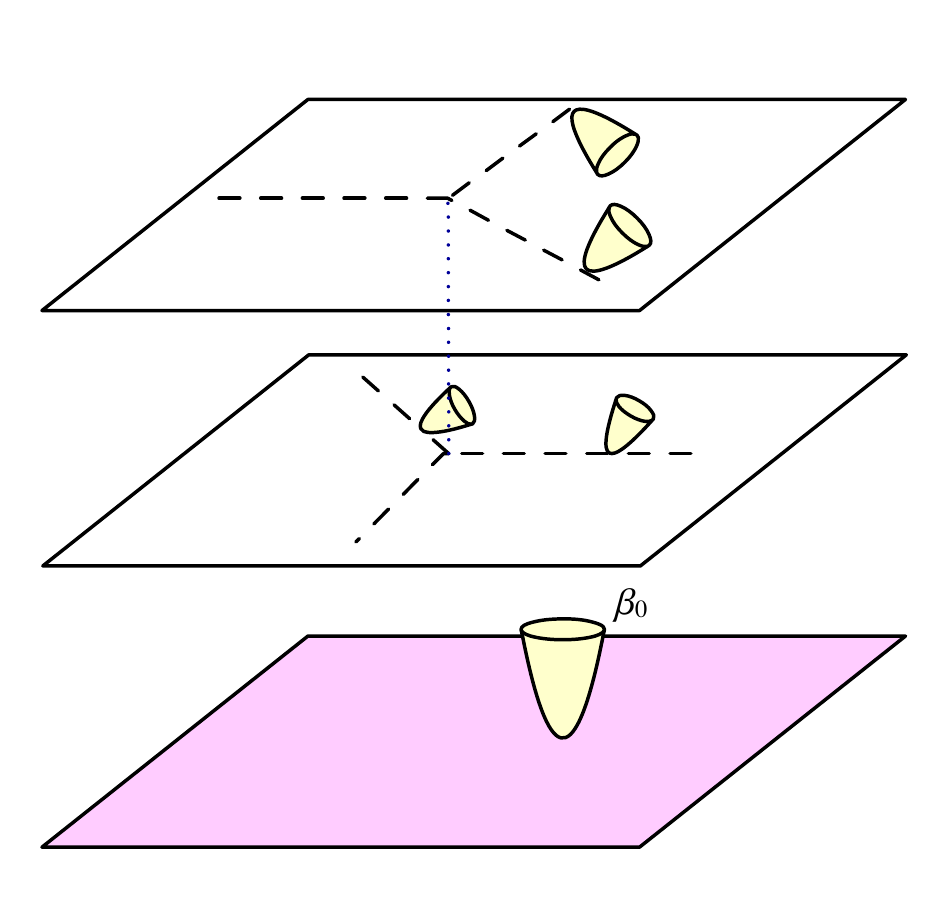}
    \caption{Gross fibration of the second smoothing.}
    \label{fig:GfibdP6-2}
   \end{subfigure}
\end{figure}

From Theorem \ref{SYZ thm}, the SYZ mirrors are
$$ uv = g(z_1,z_2) $$
where the first choice of smoothing gives 
$$ g = g^{\mathrm{con}}_1(z_1,z_2) = (1 + z_1)(1 + z_2) (1 + z_1z_2) = 1 + z_1 + z_2 + 2 z_1 z_2 + z_1^2 z_2 + z_1^2 z_2^2 + z_1 z_2^2 $$
and the second choice of smoothing gives
$$ g = g^{\mathrm{con}}_2(z_1,z_2) = (1 + z_1 + z_1 z_2) (1 + z_2 + z_1 z_2) = 1 + z_1 + z_2 + 3 z_1 z_2 + z_1^2 z_2 + z_1^2 z_2^2 + z_1 z_2^2. $$

These two equations differ only by the coefficients of $z_1 z_2$.  Thus we see that Minkowski decompositions of the polytope in this example correspond to (integral) factorizations of polynomials
for some positive integer $k$.  Since we only allow Minkowski decompositions by standard simplices, the factors that we allow are of the form $1 + \sum_i z^{u_i}$ for certain simplex with vertices $0$ and $u_i$'s. 

For the second choice of smoothing, the tropical diagrams of the components $1 + z_1 + z_1 z_2 = 0$ and $1 + z_2 + z_1 z_2 = 0$ are shown in Figure \ref{fan-decomp-dP6}.  We see that they give a decomposition of the dual fan of the polytope $P$.

\begin{figure}[htp]
\begin{center}
\includegraphics[scale=0.8]{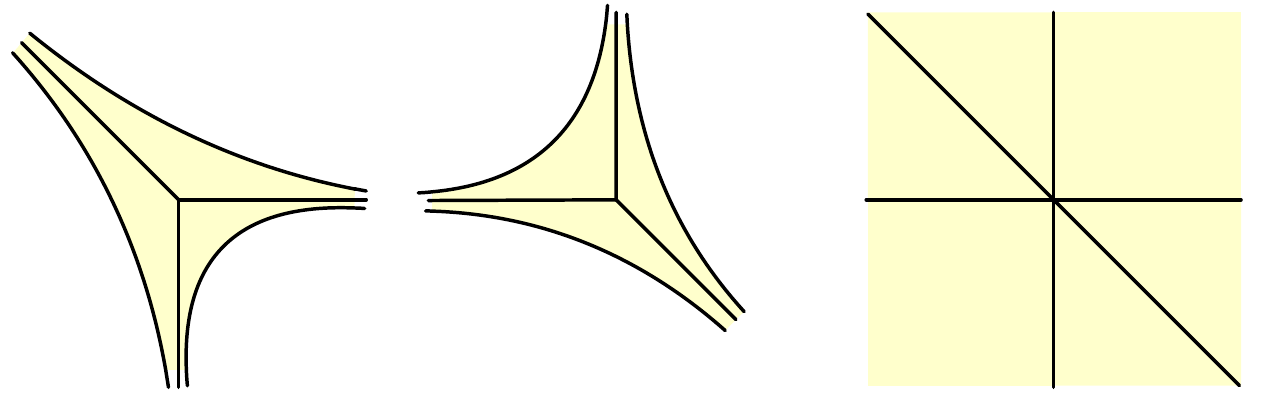}
\end{center}
\caption{The left and middle diagrams show the tropical curves and amoebas associated to $1 + z_1 + z_1 z_2 = 0$ and $1 + z_2 + z_1 z_2 = 0$.  The diagram on the right is the dual fan of the polytope $P$ in Figure \ref{decomp_dP6}.}
\label{fan-decomp-dP6}
\end{figure}

On the other side of the transition, we consider toric resolution of the singularity $\bX$.  There are three choices $Y_A, Y_B, Y_C$ which differ from each other by flops as shown in Figure \ref{fig:dP6-resol}.  In the (complexified) K\"ahler moduli, they correspond to different chambers around the large volume limit.  Their SYZ mirrors are of the same form $uv = g(z_1,z_2)$, where
$$ \widetilde{g^{\mathrm{res}}_A}(z_1,z_2) = z_2 + (1+\delta_A(q^A)) z_1 z_2 + z_1 z_2^2 + q^A_6 + q^A_1 q^A_6 z_1 + q^A_1 q^A_2 z_1^2 z_2 + q^A_5 z_1^2 z_2^2 $$
and $q^A_1 q^A_6 = q^A_3 q^A_4$, $q^A_1 q^A_2 = q^A_4 q^A_5$, $q^A_2 q^A_3 = q^A_5 q^A_6$;
$$ \widetilde{g^{\mathrm{res}}_B}(z_1,z_2) = z_2 + (1+\delta_B(q^B)) z_1 z_2 + z_1 z_2^2 + q^B_6 + q^B_1 q^B_6 z_1 + q^B_1 q^B_2 z_1^2 z_2 + q^B_4 q^B_1 q^B_2 z_1^2 z_2^2 $$
and $q^B_1 q^B_6 = q^B_3$, $q^B_1 q^B_2 = q^B_5$, $q^B_2 q^B_3 = q^B_5 q^B_6$;
$$ \widetilde{g^{\mathrm{res}}_C}(z_1,z_2) = z_2 + (1+\delta_C(q^C))z_1 z_2 + z_1 z_2^2 + q^C_1 q^C_6 + q^C_6 z_1 + q^C_2 z_1^2 z_2 + q^C_2 q^C_4 z_1^2 z_2^2 $$
and $q^C_6 = q^C_3$, $q^C_2 = q^C_5$, $q^C_2 q^C_3 = q^C_5 q^C_6$.  The K\"ahler parameters $q_i$'s correspond to the holomorphic spheres as labelled in Figure \ref{fig:dP6-resol}.  The K\"ahler moduli has dimension four.

\begin{figure}[htp]
\caption{The three toric Calabi-Yau resolutions.  The dotted lines show the polytope $P$ together with its triangulations.  Each vertex corresponds to a monomial of the mirror.  The solid lines show the moment map polytope projected to the plane, and each line segment corresponds to a holomorphic sphere which is labelled by a K\"ahler parameter measuring its area.}
  \centering
  \label{fig:dP6-resol}
   \begin{subfigure}[b]{0.3\textwidth}
   	\centering
    \includegraphics[width=\textwidth]{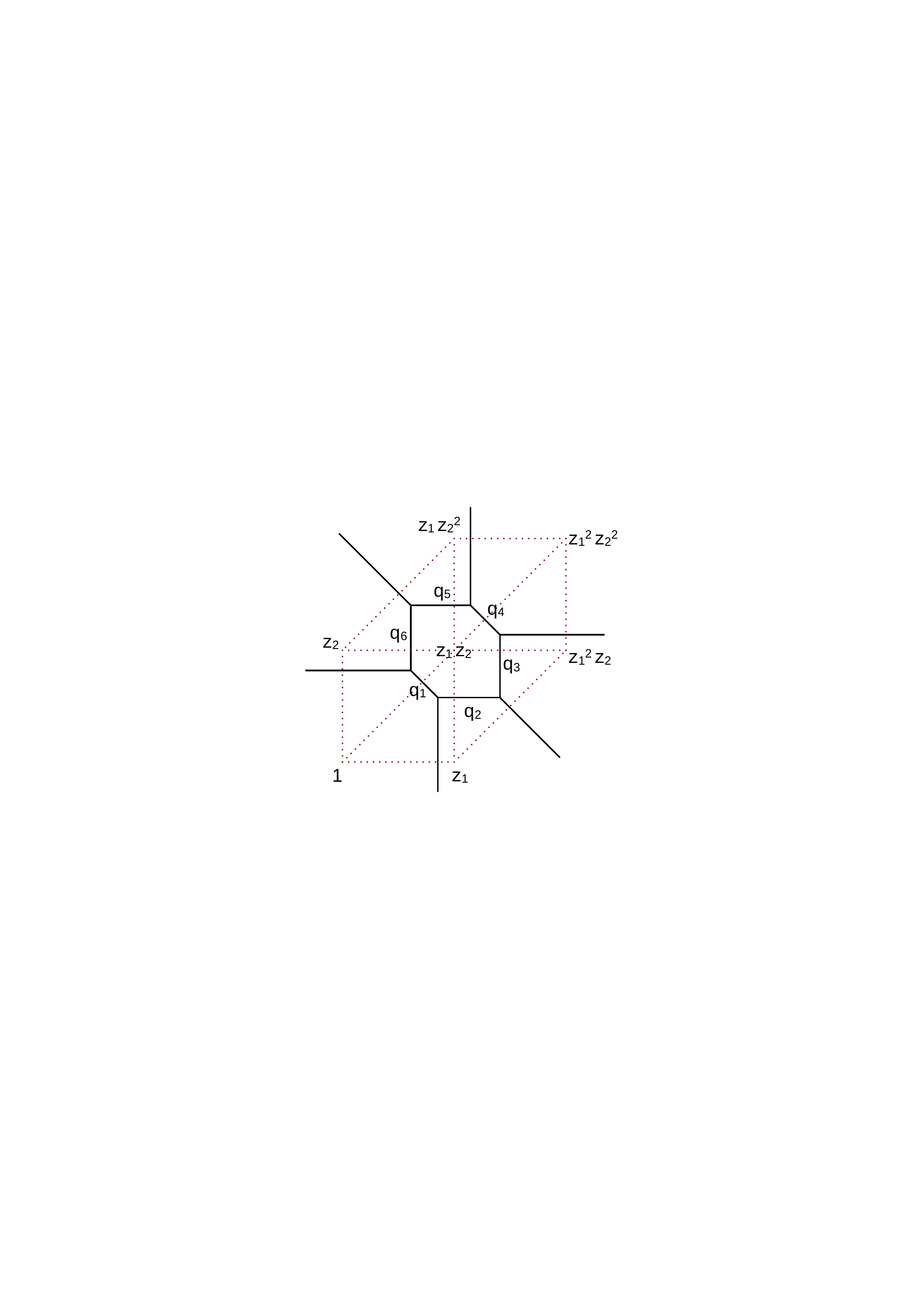}
    \caption{$Y_A$.  This is the total space of canonical line bundle of the del-Pezzo surface of degree six.}
    \label{fig:dP6-resol-A}
   \end{subfigure}
   \hspace{10pt}
   \begin{subfigure}[b]{0.3\textwidth}
   	\centering
    \includegraphics[width=\textwidth]{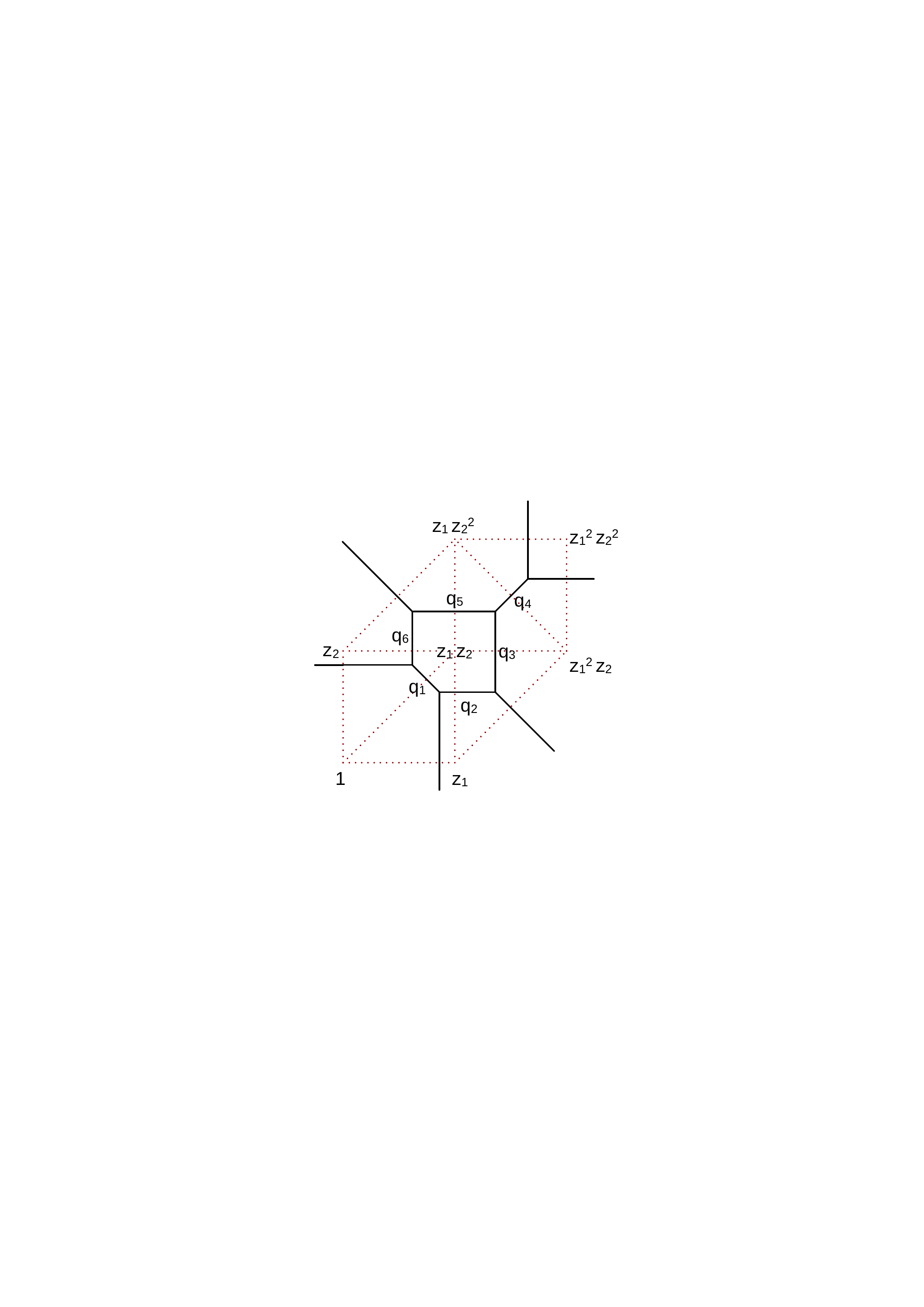}
    \caption{$Y_B$.}
    \label{fig:dP6-resol-B}
   \end{subfigure}
      \hspace{10pt}
   \begin{subfigure}[b]{0.3\textwidth}
   	\centering
    \includegraphics[width=\textwidth]{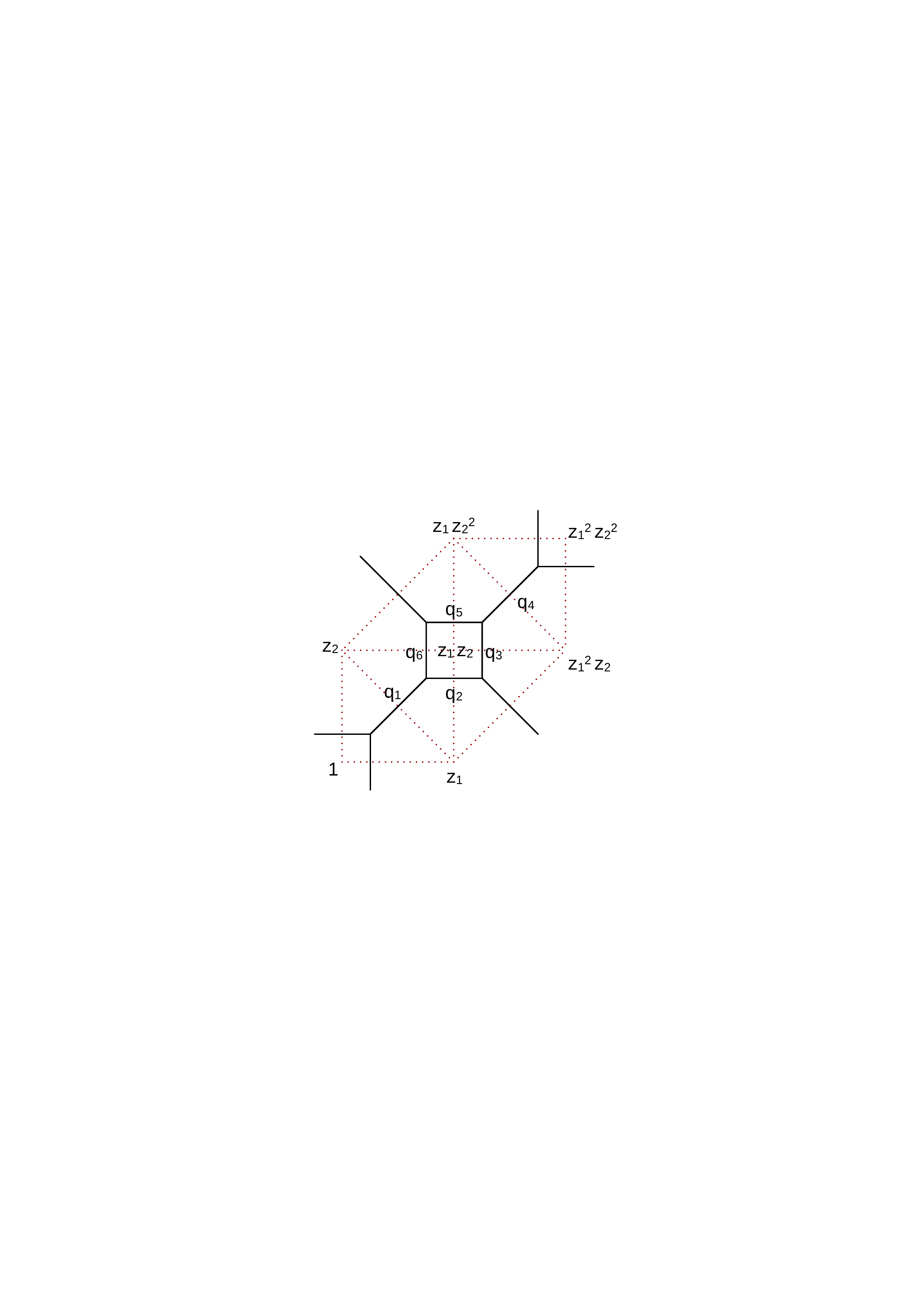}
    \caption{$Y_C$.}
    \label{fig:dP6-resol-C}
   \end{subfigure}
\end{figure}

By the change of coordinates $z_1 \mapsto (1+\delta(q))^{-1} z_1$, $z_2 \mapsto (1+\delta(q)) z_2$ and $u \mapsto (1+\delta(q)) u$, all the above expressions are brought to the form
$$ g_t(z_1,z_2) = z_2 + z_1 z_2 + z_1 z_2^2 + t_1 + t_2 z_1 + t_3 z_1^2 z_2 + t_4 z_1^2 z_2^2$$
and so all of them belong to the same mirror family of complex varieties around the same large complex structure limit.  The SYZ mirrors after such change of coordinates are denoted as $g^{\mathrm{res}}_A, g^{\mathrm{res}}_B, g^{\mathrm{res}}_C$ respectively.

By the open mirror theorem for toric Calabi-Yau manifolds \cite{CCLT13}, $g^{\mathrm{res}}$ can be computed from the mirror maps and expressed as the following:
\begin{align*}
g^{\mathrm{res}}_A(z_1,z_2) &= z_2 + z_1 z_2 + z_1 z_2^2 + \check{q}^A_6 + \check{q}^A_1 \check{q}^A_6 z_1 + \check{q}^A_1 \check{q}^A_2 z_1^2 z_2 + \check{q}^A_5 z_1^2 z_2^2;\\
g^{\mathrm{res}}_B(z_1,z_2) &= z_2 + z_1 z_2 + z_1 z_2^2 + \check{q}^B_6 + \check{q}^B_1 \check{q}^B_6 z_1 + \check{q}^B_1 \check{q}^B_2 z_1^2 z_2 + \check{q}^B_1 \check{q}^B_2 \check{q}^B_4 z_1^2 z_2^2;\\
g^{\mathrm{res}}_C(z_1,z_2) &= z_2 + z_1 z_2 + z_1 z_2^2 + \check{q}^C_1 \check{q}^C_6 + \check{q}^C_6 z_1 + \check{q}^C_2 z_1^2 z_2 + \check{q}^C_2 \check{q}^C_4 z_1^2 z_2^2;.
\end{align*}
where $q^A$ and $\check{q}^A$ (resp. $B,C$) are related by the mirror maps $q^{A}_i = q^{A}_i(\check{q}^{A})$ (resp. $B,C$).  $\check{q}_i$'s satisfy the same relations as $q_i$'s:
\begin{align*}
\check{q}^A_1 \check{q}^A_6 = \check{q}^A_3 \check{q}^A_4&; \check{q}^A_1 \check{q}^A_2 = \check{q}^A_4 \check{q}^A_5; \check{q}^A_2 \check{q}^A_3 = \check{q}^A_5 \check{q}^A_6.\\
\check{q}^B_1 \check{q}^B_6 = \check{q}^B_3&; \check{q}^B_1 \check{q}^B_2 = \check{q}^B_5; \check{q}^B_2 \check{q}^B_3 = \check{q}^B_5 \check{q}^B_6.\\
\check{q}^C_6 = \check{q}^C_3&; \check{q}^C_2 = \check{q}^C_5; \check{q}^C_2 \check{q}^C_3 = \check{q}^C_5 \check{q}^C_6. 
\end{align*}
We can see that $g^{\mathrm{res}}_A$ equals to $g^{\mathrm{res}}_B$ by the change of variables
$$(\check{q}_4^{A})^{-1} = \check{q}_4^{B}; \check{q}_1^{A} = \check{q}_1^{B}; \check{q}_2^{A} = \check{q}_2^{B};\check{q}_6^{A} = \check{q}_6^{B},$$
and $g^{\mathrm{res}}_B$ equals to $g^{\mathrm{res}}_C$ by the change of variables
$$(\check{q}_1^{B})^{-1} = \check{q}_1^{C}; \check{q}_3^{B} = \check{q}_3^{C}; \check{q}_4^{B} = \check{q}_4^{C}; \check{q}_5^{B} = \check{q}_5^{C}.$$
Thus the mirror complex variety undergoes no topological change under the flops, which is a well-known prediction by string theorists. 

Going back to smoothings of $\bX$, by the change of coordinates $z_1 \mapsto z_1/2$, $z_2 \mapsto 2 z_2$ and $u \mapsto 2u$, $uv = g^{\mathrm{con}}_1$ is equivalent to
$$ uv = z_2 + z_1 z_2 + z_1 z_2^2 + \frac{1}{2} + \frac{1}{2} z_1 + \frac{1}{2} z_1^2 z_2 + \frac{1}{2} z_1^2 z_2^2.$$
Similarly $uv = g^{\mathrm{con}}_2$ is equivalent to
$$ uv = z_2 + z_1 z_2 + z_1 z_2^2 + \frac{1}{3} + \frac{1}{3} z_1 + \frac{1}{3} z_1^2 z_2 + \frac{1}{3} z_1^2 z_2^2.$$
Thus we see that the SYZ mirrors of the smoothings correspond to two conifold limit points of the complex moduli:
$$(t_1,t_2,t_3,t_4) = \left(\frac{1}{2},\frac{1}{2},\frac{1}{2},\frac{1}{2}\right)$$
and
$$(t_1,t_2,t_3,t_4) = \left(\frac{1}{3},\frac{1}{3},\frac{1}{3},\frac{1}{3}\right).$$
Then the specialization of variables in Theorem \ref{thm:ct} for the conifold transitions are
\begin{align*}
\check{q}^A_1 = 1, \check{q}^A_2 = \check{q}^A_5 = \check{q}^A_6 = \frac{1}{2}; \\
\check{q}^B_1 = \check{q}^B_4 = 1; \check{q}^B_2 = \check{q}^B_6 = \frac{1}{2};\\
\check{q}^C_1 = \check{q}^C_4 = 1; \check{q}^C_2 = \check{q}^C_6 = \frac{1}{2}
\end{align*}
for the first smoothing, and
\begin{align*}
\check{q}^A_1 = 1, \check{q}^A_2 = \check{q}^A_5 = \check{q}^A_6 = \frac{1}{3}; \\
\check{q}^B_1 = \check{q}^B_4 = 1; \check{q}^B_2 = \check{q}^B_6 = \frac{1}{3};\\
\check{q}^C_1 = \check{q}^C_4 = 1; \check{q}^C_2 = \check{q}^C_6 = \frac{1}{3}
\end{align*}
for the second smoothing.

\bibliographystyle{amsplain}
\bibliography{geometry}

\end{document}